\theoremstyle{definition}
\newtheorem{defi}{Definition}[section]
\theoremstyle{definition}
\theoremstyle{definition}
\theoremstyle{definition}
\theoremstyle{definition}
\newtheorem{rmk}[defi]{Remark}
\theoremstyle{definition}
\theoremstyle{definition}
\newtheorem{eg}[defi]{Example}
\theoremstyle{definition}
\theoremstyle{definition}
\theoremstyle{plain}
\newtheorem{lem}[defi]{Lemma}
\theoremstyle{plain}
\newtheorem{prop}[defi]{Proposition}
\theoremstyle{plain}
\theoremstyle{plain}
\newtheorem{thm}[defi]{Theorem}
\theoremstyle{plain}
\theoremstyle{plain}
\newtheorem{cor}[defi]{Corollary}
\newcommand{\z}{\mathbb{Z}}
\newcommand{\cc}{\mathbb{C}}
\newcommand{\pp}{\mathbb{P}}
\newcommand{\oo}{\mathscr{O}}
\renewcommand{\tau}{\uptau}
\DeclareMathOperator{\spec}{Spec}
\DeclareMathOperator{\proj}{Proj}
\newcommand{\wt}{\mbox{\bf wt}}
\newcommand{\gitquo}{/\!\!/} 
\newlength\mlen
\newcommand{\m}[1]{\makebox[\mlen]{$#1$}}
\newcommand{\len}[2]{\makebox[\widthof{#1}]{\ensuremath{#2}}}
\tikzset{
  level 1/.style={level distance=6cm, sibling distance=3.5cm},
  level 2/.style={level distance=6cm, sibling distance=1.3cm}
}
\title{Flips and Flops Constructed by GIT Quotient}
\author{Hung-Pin Chang}
\date{}
\begin{document}

\maketitle

\begin{abstract}
Brown constructed a series of threefold flips given by the GIT quotient of a hypersurface in $\cc^5$. In this article, we classify threefold flips and flops which are the GIT quotients of complete intersections in $\cc^6$. We also show that there are no more new examples as GIT quotients of complete intersections in $\cc^n$ with $n\geq7$.
\end{abstract}

\section{Introduction}
The minimal model program plays a fundamental role in modern algebraic geometry. Roughly speaking, it aims to produce a nice minimal model among birational equivalent classes of algebraic varieties. Flips and flops are elementary birational maps appearing in dimension three or higher. The existence and termination of flips have been in the center field for decades. Also, in case minimal models are not unique, two birationally equivalent minimal models are connected by flops (cf. \cite{Kaw08}). Therefore, a detailed and explicit understanding of flips and flops is very much welcome. 

We first recall the definition of flips and flops. 
\begin{defi}
A threefold \emph{flip} (resp. \emph{flop}) is a diagram $X^-\to X\gets X^+$ of normal complex quasiprojective threefolds satisfying the following conditions:
\begin{itemize}
    \item[$\bm C.$] both morphisms are birational and projective, contracting only finitely many curves $\Gamma\subset X^-$ and $\Gamma'\subset X^+$ to the isolated singular point $P\in Y$.
    \item[$\bm I.$] their canonical divisors $-K^-\coloneqq -K_{X^-}$ and $K^+\coloneqq  K_{X^+}$ are relatively ample (resp. relatively trivial), i.e., $-K^-.\Gamma>0$ (resp. $K^-.\Gamma=0$) for any curve $\Gamma$ contracted by $X^-\to X$ and similarly for $K^+$.
    \item[$\bm S.$] $X^-$ and $X^+$ has only terminal singularities. 
\end{itemize}
\end{defi}
\begin{rmk}
Conditions C, I, and S stands for contraction, intersection, and singularity conditions, respectively.
\end{rmk}

However, there were not so many explicit examples of flips and flops, even in dimension three. 
 In \cite{Br99}, Brown systematically constructed $3$-dimensional examples by GIT quotient, and he finds all possible flips given by a hypersurfaces $(f=0)\subseteq\cc^5$ quotients via $\cc^*$-actions. 

The purpose of this article is two-fold. First, we extend Brown's constructions to codimension $2$ complete intersection in $\cc^6$. By a very similar technique as in Brown's work, we classify flips and flops as GIT quotient, which we will call them {\it GIT flips and GIT flops} for brevity, of codimension $2$ complete intersection in $\cc^6$.  Notice that one can trivially extend a GIT flip or flop in $\cc^n$ to $\cc^{n+1}$ by introducing a new variable $w$ and a new hypersurface involving a linear term in $w$. Therefore, we are only interested in those examples not of this type, which we call them {\it reduced}. 

\begin{thm}
    Any reduced GIT flip or flop of codimension $2$  is one of the following. 
\begin{table}[H]
    \centering
    \begin{tabular}{llll}
        \hline \hline
        \multicolumn{1}{c}{} &
        \multicolumn{1}{c}{monomials in $f_1$} &
        \multicolumn{1}{c}{monomials in $f_2$} &
        \multicolumn{1}{c}{$\cc^*$-action} \\
        \hline \hline
             Flop & $x_1y_1 + g_1(z_1,z_2)$ & $x_2y_i + g_2(z_1,z_2)$ & $(\m{1},\m{1},-1,-\m{1},0,0;0,0)$ \\
             Flip & $x_1y_1+h(x_2^{a_3},x_3^{a_2}) $ & $x_2y_2 + z^m$ & $(a_1,a_2,a_3,-b_1,-a_2,0;a_1-b_1,0)$ \\
        \hline \hline
    \end{tabular}
    \caption{Flips and Flops for codimension $2$ case}
    \label{flipflop6}
\end{table}
\end{thm}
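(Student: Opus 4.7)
The plan is to adapt Brown's approach in \cite{Br99} from hypersurfaces in $\cc^5$ to codimension $2$ complete intersections in $\cc^6$. Assume the $\cc^*$-action has weights $(w_1,\ldots,w_6)$ on the coordinates and that the complete intersection $Y = (f_1 = f_2 = 0) \subseteq \cc^6$ has defining polynomials semi-invariant of weights $(d_1,d_2)$. The two GIT quotients $X^\pm$ with respect to the opposite linearisations furnish the candidate diagram $X^- \to X \gets X^+$, and the task is to determine precisely when conditions (C), (I), (S) together with reducedness are satisfied.

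First I would translate each of the three conditions into numerical data on $(w_i, d_j)$. The contraction condition forces the weight vector to have at least two entries of each sign, with the extra requirement that the vanishing loci on the positive and negative sides of $Y$ reduce to a finite union of curves; this forces specific pure monomials to appear in at least one of $f_1, f_2$. The intersection condition is controlled by $\delta := \sum_i w_i - d_1 - d_2$: adjunction expresses $-K_{X^\pm}$ as a multiple of $\delta$ times the tautological class, so $\delta = 0$ produces a flop and $\delta \neq 0$ a flip whose direction is determined by the sign. The singularity condition then requires that every cyclic quotient singularity produced on $X^\pm$ be terminal, which by Reid's criterion becomes an explicit congruence condition on the local weights.

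Next I would enumerate weight patterns. By reducedness, no weight-zero coordinate can appear as a pure linear term in any $f_i$ (otherwise that variable may be eliminated, reducing to a $\cc^{n-1}$-example). After relabelling, I would fix the sign pattern of $(w_1,\ldots,w_6)$ and record which monomials are admissible in each $f_i$ given its weight $d_i$. Treating the flop case $\delta = 0$ first, one checks that the only way to satisfy (C) and (S) simultaneously is for each equation to contain an $x_iy_i$-type binomial paired with an invariant piece $g_i(z_1,z_2)$, recovering the first row of \Cref{flipflop6}. For the flip case $\delta > 0$, a more careful diophantine analysis shows that the required cyclic quotient weights force $h$ to involve only $x_2^{a_3}$ and $x_3^{a_2}$ and $f_2$ to be of the form $x_2y_2+z^m$, yielding the second row. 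Finally, the same circle of arguments applied to $n \geq 7$ shows that at least one variable can always be eliminated by reducedness, proving the last assertion.

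The main obstacle I anticipate is the combinatorial explosion of the case analysis: compared to Brown's single equation in five variables, the presence of a second semi-invariant $f_2$ means that $d_1$ and $d_2$ interact, and many pseudo-solutions arise in which $f_1$ and $f_2$ share a variable or redundantly cut out the same scheme, neither of which is permitted for a genuine codimension-$2$ example. The key technical step will be a rigidity lemma asserting that once the monomials responsible for the contraction condition are fixed, the terminal-singularity test via Reid's criterion almost uniquely pins down the remaining weights; it is this rigidity that collapses the enumeration down to the two families displayed in the theorem.
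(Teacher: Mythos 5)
Your proposal follows essentially the same route as the paper: conditions C, I, and S are converted into numerical constraints on the weights (with $\tau=\sum_i a_i-\sum_j b_j-\sum_k e_k$ deciding flip versus flop, as in \cref{propi}), the admissible sign patterns are enumerated as in \cref{propc}, and the terminal-singularity classification pins down the remaining weights and forced monomials, with your anticipated ``rigidity lemma'' playing exactly the role of \cref{monomial}. One correction: reducedness must forbid a linear term in \emph{any} coordinate of any $f_j$, not only in the weight-zero ones, since a linear term in a nonzero-weight variable equally permits elimination down to a $\cc^5$ example, and this stronger hypothesis is what the case analysis (in particular \cref{lw}, \cref{isosing}, and the exclusion of the sign patterns handled in \cref{propc6}) actually relies on.
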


The second purpose of this article is to show that there is essentially no new example of threefold GIT flips or flops in codimension $3$ or higher. 

\begin{thm}\label{thmhc}
    Any reduced threefold GIT flip or flop has codimension $\le 2$.
\end{thm}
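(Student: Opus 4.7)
\textit{Proof proposal.} The plan is to argue by contradiction: suppose that a reduced threefold GIT flip or flop is given by a codimension-$c$ complete intersection $V=V(f_1,\ldots,f_c)\subset\cc^n$ with $c\ge 3$. Since $\dim V=n-c$ and taking the $\cc^*$-quotient drops the dimension by one, the threefold condition forces $n=c+4$. After relabeling, I write the coordinates as $x_1,\ldots,x_p$ (positive weight), $y_1,\ldots,y_q$ (negative weight), and $z_1,\ldots,z_r$ (zero weight), so $p+q+r=c+4$ and $p,q\ge 1$ by condition $\bm C$.

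First, from condition $\bm C$ I would extract dimension constraints. Since the exceptional curves $\Gamma\subset X^-$ and $\Gamma'\subset X^+$ are one-dimensional, their $\cc^*$-preimages $E_-:=V\cap\{y_1=\cdots=y_q=0\}$ and $E_+:=V\cap\{x_1=\cdots=x_p=0\}$ must be two-dimensional. As $E_-$ (resp.~$E_+$) is cut out by the $c$ restrictions $f_i|_{y=0}$ (resp.~$f_i|_{x=0}$) inside $\cc^{p+r}$ (resp.~$\cc^{q+r}$), the usual codimension bound gives $p+r\le c+2$ and $q+r\le c+2$. Hence $p,q\ge 2$, $r\le c$, and $p+q\ge 4$.

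Next, I would carry out the monomial analysis analogous to the codimension-$2$ classification. By $\cc^*$-equivariance each $f_i$ is weighted-homogeneous of some weight $d_i$, and condition $\bm S$ (isolated singularity at the origin of $V$, together with the terminal condition on the quotient) combined with reducedness severely restrict the possible leading terms: in particular, each $f_i$ must contain a monomial that mixes a positive- and a negative-weight variable. Otherwise, either the Jacobian of $(f_1,\ldots,f_c)$ drops rank along a positive-dimensional coordinate subspace (violating isolated singularity), or some $f_i$ acquires a removable linear term in a variable of matching weight (contradicting reducedness). A further matching argument should show that the patterns of mixed leading monomials across the $c$ equations are $\cc$-linearly independent modulo rescaling, and together with the constraints above this would force $c\le 4$.

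The main obstacle is the finer case analysis for $c=3$ and $c=4$, where the dimension counts are still self-consistent. For these values I would enumerate the admissible weight configurations $(p,q,r,d_1,\ldots,d_c)$ allowed by the previous two steps, and for each analyze the "pure-$z$" parts $g_i:=f_i|_{x=y=0}$. The finiteness of $E_\pm$ forces the $g_i$ to cut out a $0$-dimensional locus in $\cc^r$, while the terminal condition on $V\gitquo\cc^*$---via the Reid--Tai criterion applied to the induced cyclic quotient at the contracted point $P$---constrains which weight combinations $d_i$ are admissible. I expect to show that every remaining configuration forces at least one $f_i$ to contain a linear term in some variable of matching weight, contradicting reducedness. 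This enumerative step will parallel the derivation of Table~\ref{flipflop6} in codimension $2$ and should be the most laborious part of the proof.
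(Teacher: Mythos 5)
Your proposal has the right overall shape---reduce to dimension/weight constraints from condition $\bm C$, then exploit terminality and reducedness to kill high codimension, then a case analysis for the borderline codimensions---but the decisive step is missing. The engine of the paper's proof is the structural fact that a terminal threefold point is a (cyclic quotient of a) \emph{hypersurface} singularity: hence at the terminal point $P_1\in X^-$ at least $c-1$ of the localized equations $\bar f_j=f_j|_{x_1=1}$ must carry linearly independent linear terms, i.e.\ $x_1^{\ell_j}\omega_j\in f_j$ for \emph{distinct} coordinates $\omega_j\neq x_1$, and symmetrically at $P_{r+1}\in X^+$ with $y_1^{m_j}\eta_j\in f_j$ for distinct $\eta_j$. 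Once one also knows $a_1>a_i$ for $i\neq1$ (the paper's \cref{lemhc}, itself a short argument via \cref{monomial} and \cref{propi}), comparing the two expressions for the weight $e_j$ of a single equation that is forced to have linear terms at \emph{both} points gives $a_1-b_1\le e_j\le a_1-b_1$, hence $\wt\eta_j=a_1$ for two distinct $\eta_j$'s---impossible. This is what rules out $c\ge4$ in a few lines. Your "matching argument should show the mixed leading monomials are linearly independent \ldots and this would force $c\le4$" gestures at something like this but never identifies the hypersurface-germ constraint, and your threshold is off: the paper disposes of $c\ge4$ cleanly and the only hard case is $c=3$.

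Two further problems. First, your dimension count is set up on the wrong locus: the fiber of $\pi_-$ over the origin is $V\cap\{\mathbf y=\mathbf z=\mathbf 0\}$ (the zero-weight coordinates must also vanish for the orbit closure to reach $0$), not $V\cap\{\mathbf y=\mathbf 0\}$, so the inequalities $p+r\le c+2$, $q+r\le c+2$ and the conclusion $r\le c$ do not follow as stated; the correct conclusions are $p,q\ge2$ together with the forced signs of the $e_k$ as in \cref{propc}. Second, for the remaining codimension-$3$ case your plan (enumerate weights, look at the pure-$z$ parts, apply Reid--Tai) is far coarser than what is actually required: the paper's \cref{thmhce3} runs a case analysis on the type Q/H1/H2/H3 of $P_1$ and of $\widetilde P_1=P_{r+1}$ simultaneously, using the explicit coprimality and congruence conditions of \cref{thms} and the inequality of \cref{propi} to eliminate each type in turn, with sub-cases on $t\in\{0,1\}$ and on the sign pattern of the weights. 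A Reid--Tai computation alone will not reproduce the H1/H2 dichotomy or the interplay between the two endpoints, so as written the proposal does not close the $c=3$ case.
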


Below, we summarize and list singularity types of $X^-$ and $X^+$ of our classification. 

\begin{table}[H]
    \centering
    \begin{tabular}{lll}
        \hline 
        \multicolumn{1}{c}{} &
        \multicolumn{1}{c}{Sing. in $X^-$} &
        \multicolumn{1}{c}{Sing. in $X^+$} \\
        \hline 
            $(1)$ & $cA/a_1$ (qt.), $cA/1$ & $cA/b_1$ (qt.), $cA/a_2$  \\
            $(2)$ & $cA/a_1$ (qt.), $cA/a_2$ & $cA/b_1$ (qt.), $cA/a_2$  \\
            $(3)$ & $cAx/4$ & $cA/2$  \\
            $(4)$ & $cA/a$ & $cA/b$  \\
            $(5)$ & $cD/3$ & $cAx/2$  \\
            $(6)$ & $cAx/4$ & $cA/3$ (qt.), $cA/2$  \\
        \hline 
    \end{tabular}
    \caption{Singularity types of GIT flips for hypersurface}
    \label{singflip5}
\end{table}

\begin{table}[H]
    \centering
    \begin{tabular}{lll}
        \hline 
        \multicolumn{1}{c}{} &
        \multicolumn{1}{c}{Sing. in $X^-$} &
        \multicolumn{1}{c}{Sing. in $X^+$} \\
        \hline 
            $(1)$ & smooth & smooth \\
            $(2)$ & cDV & cDV \\
            $(3)$ & $cA$, $cA$ & $cA$, $cA$ \\
        \hline 
    \end{tabular}
    \caption{Singularity types of GIT flops for codimension $2$}
    \label{singflop6}
\end{table}

\begin{table}[H]
    \centering
    \begin{tabular}{lll}
        \hline 
        \multicolumn{1}{c}{} &
        \multicolumn{1}{c}{Sing. in $X^-$} &
        \multicolumn{1}{c}{Sing. in $X^+$} \\
        \hline 
            $(1)$ & $cA/a_1$ & smooth \\
            $(2)$ & $cA/a_1$ & $cA/b$ \\
            $(3)$ & $cA/a_1$ & $cA/b_1$, $cA/b_2$ \\
        \hline 
    \end{tabular}
    \caption{Singularity types of GIT flips for codimension $2$}
    \label{singflip6}
\end{table}

In \cref{singflop6}, the cDV singularity may be $cA$, $cD$, $cE_6$, $cE_7$, or $cE_8$. For the flips, $X^+$ may have two $cA/m$ singularities and this is not in Brown's list. We construct examples with those singularities and their explicit polynomials and $\cc^*$-actions will be described in \cref{ex1flop6} and \cref{ex2flop6}, as well as \cref{ex1flip6} through \cref{ex4flip6}.

Gavin Brown informed us that Laura Mallinson independently obtained similar results in her 2022 Ph.D. thesis. These results concern GIT flips of codimension $2$ (cf. \cite{Mal22}).

\paragraph{Acknowledgments.} We would like to sincerely thank Professors Jungkai Alfred Chen, Hsueh-Yung Lin, Ching Jui Lai, Jiun-Cheng Chen, Jheng-Jie Chen, and other lab members, for their valuable discussions. We are also grateful to Professor Gavin Brown for informing us about Laura Mallinson's independent work. Our sincere thanks go to Professor Hamid Abban and to Laura Mallinson for their helpful comments and communication. We are thankful to National Taiwan University, the National Center for Theoretical Sciences, the Institute of Mathematics at Academia Sinica, and the Laboratory of Birational Geometry for their support. Finally, we appreciate the funding from grant MOST112-2123-M002-005, generously provided by the Ministry of Science and Technology of Taiwan.

\section{Construction}

Consider a $\cc^*$-action on $\cc^n$ given by
\[
\lambda\cdot (x_1,\ldots,x_r,y_1,\ldots,y_s,z_1,\ldots,z_t) = (\lambda^{a_1}x_1,\ldots,\lambda^{a_r}x_r,\lambda^{-b_1}y_1,\ldots,\lambda^{-b_s}y_s,z_1,\ldots,z_t),
\]
where $\lambda\in\cc^*$, $r+s+t=n$, and $a_i,b_j\in\z_{>0}$; we say that $a_i,-b_j,0$ is the weight of $x_i,y_j,z_k$, respectively. Then $\cc^*$-action give us GIT quotients $V\coloneqq \cc^n\gitquo \cc^*$, $V^-\coloneqq (\cc^n-B^-)\gitquo\cc^*$, and $V^+\coloneqq (\cc^n-B^+)\gitquo\cc^*$, where $B^-\coloneqq (x_1=\cdots=x_r=0)$ and $B^+\coloneqq (y_1=\cdots=y_s=0)$. We denote $(1:0:\cdots:0)\in V$ by $P_1$ for convenience. Similarly, $P_2=(0:1:0:\cdots:0)$, $P_3=(0:0:1:0:\cdots:0)$, etc. 

Let $A\coloneqq (f_1=\cdots=f_{c}=0)\subseteq \cc^n$ with $n=c+4$ be a $4$ dimensional completely intersection, where $f_1, \ldots, f_{c}$ are homogeneous polynomials for the weights of coordinates.  The weights of polynomials $f_k$ is an integer $e_k $ such that  $\lambda\cdot f_k=\lambda^{e_k} f_k$. Then we denote this action on $A$ by $(a_1,\ldots,a_r,b_1,\ldots,b_s,0,\ldots,0;e_1,\ldots,e_{c})$. We define closed $3$-folds
\[
X\coloneqq  A\gitquo\cc^*, \quad X^-\coloneqq (A-B^-)\gitquo\cc^*, \quad \mbox{and} \quad X^+\coloneqq (A-B^+)\gitquo\cc^*,
\]
in $V$, $V^-$, and $V^+$, respectively. 

More precisely, write $I\coloneqq (f_1.\ldots,f_{c})$, and then
\begin{align*}
    X^{\phantom{-}}&=\spec\bigg(\frac{\cc[\mathbf{x},\mathbf{y},\mathbf{z}]}{I}\bigg)^{\cc^*},\\
    X^-&=\bigcup_{i=1}^r X^-_i=\bigcup_{i=1}^r \spec\bigg(\frac{\cc[\mathbf{x},\mathbf{y},\mathbf{z},x_i^{-1}]}{I}\bigg)^{\cc^*}.
\end{align*}
This induces morphisms $\pi_{-, i}: X^-_i\to X$ given by inclusion between invariant rings, and hence we have morphism $\pi_-:X^-\to X$. Similarly, we have the morphism $\pi_+:X^+\to X$. 

We set $C^-\coloneqq \pi_-^{-1}(0)$ and $C^+\coloneqq \pi_+^{-1}(0)$. They can be understood as 
\[
C^-
=\proj\frac{\cc[\mathbf{x},\mathbf{y},\mathbf{z}]}{(I,\mathbf{y},\mathbf{z})}
=\proj \frac{\cc[\mathbf{x}]}{(f_1(\mathbf{x},\mathbf{0},\mathbf{0}), \ldots,f_c(\mathbf{x},\mathbf{0},\mathbf{0}) )};  
\]
\[ 
C^+
=\proj\frac{\cc[\mathbf{x},\mathbf{y},\mathbf{z}]}{(I,\mathbf{x},\mathbf{z})}
=\proj \frac{\cc[\mathbf{y}]}{(f_1(\mathbf{0},\mathbf{y},\mathbf{0}), \ldots, f_c(\mathbf{0},\mathbf{y},\mathbf{0}))}.
\]

\section{Numerical Conditions}

In Brown's work, he deduced some numerical conditions for GIT flips of codimension $1$. (See Proposition $1$ and Corollary $3$ in \cite{Br99}.) For example, Brown used the classification of terminal singularities to check that $P_i$ in $X^-$ are terminal, which forces certain monomials to appear in $f_i$ and gives some relations between weights. We first extend these conditions to higher codimensional situations. 

\begin{prop}[C]\label{propc}
    The weights given above have at least two positive weights and at least two negative weights. More precisely, 
    \begin{itemize}
        \item[$(i)$]for $n=5$, the weights is one of the following forms
        \[
        (+,+,+,-,-;+), \; (+,+,-,-,0;0), \; \text{or} \; (+,+,-,-,-;-).
        \]
        \item[$(ii)$]for $n=6$, the weights is one of the following forms
        \[
        \begin{array}{cc}
            (+,+,+,+,-,\m{-};+,\m{+}), 
            & (+,+,-,-,\m{-},\m{-};-,\m{-}), \\ 
            (+,+,+,-,-,\m{0};+,\m{0}),  
            & (+,+,-,-,\m{-},\m{0};-,\m{0}), \\ 
            (+,+,+,-,-,\m{-};+,\m{-}), 
            & (+,+,-,-,\m{0},\m{0};e_1,e_2),  
        \end{array}
        \]
        where $(e_1,e_2)$ may be $(0,0)$ or $(+,-)$. 
    \end{itemize}
\end{prop}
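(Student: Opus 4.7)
The strategy adapts the proof of Proposition $1$ in \cite{Br99} from the hypersurface setting to complete intersections. It splits into an easy dimension count giving $r \geq 2$ and $s \geq 2$, followed by a finite case analysis pinning down the signs of the equation weights $e_1,\ldots,e_c$.

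The first step uses the contraction condition: for $\pi_-\colon X^- \to X$ (resp.\ $\pi_+$) to contract at least one curve, the fibre $C^-$ (resp.\ $C^+$) must have dimension $\geq 1$. From the presentation
\[
C^- = \proj\bigl(\mathbb{C}[x_1,\ldots,x_r] / (f_1|_{\mathbf{y}=\mathbf{z}=0},\ldots,f_c|_{\mathbf{y}=\mathbf{z}=0})\bigr),
\]
Krull's height theorem gives $\dim C^- \geq r - 1 - c^-$, where $c^-$ denotes the number of $f_k$'s with non-zero restriction to $\mathbb{C}[\mathbf{x}]$. Imposing $\dim C^- \geq 1$ forces $c^- \leq r - 2$; in particular $r \geq 2$, and symmetrically $s \geq 2$.

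Next, I would enumerate the triples $(r,s,t)$ with $r + s + t = n$ and $r, s \geq 2$. For $n = 5$, up to exchanging the roles of $\mathbf{x}$ and $\mathbf{y}$, the triples are $(3,2,0)$, $(2,2,1)$, $(2,3,0)$; for $n = 6$ they are $(4,2,0)$, $(3,3,0)$, $(2,4,0)$, $(3,2,1)$, $(2,3,1)$, $(2,2,2)$. These correspond one-to-one with the rows in the statement once the signs of the $e_k$'s are determined.

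To pin down these signs, observe that a pure $\mathbf{x}$-monomial in $f_k$ forces $e_k > 0$, a pure $\mathbf{y}$-monomial forces $e_k < 0$, and a pure $\mathbf{z}$-monomial gives $e_k = 0$. The bounds $c^- \leq r - 2$ and $c^+ \leq s - 2$ together with the requirement that $0 \in X$ be an isolated singularity on a threefold (which in the $t \geq 1$ subcases forces at least one $f_k$ to carry a pure-$\mathbf{z}$ monomial, as otherwise $X$ would split as a product with a $z$-factor) cut the options down to exactly the sign patterns listed. The main technical obstacle lies in the subcases with small $r, s$ and positive $t$, particularly $(r,s,t) = (2,2,2)$ in codimension $2$: here one has to carefully use the weighted-homogeneity of $f_1, f_2$ together with the isolated-singularity constraint at the origin to exclude sign pairs such as $(+,0)$ or $(+,+)$ for $(e_1, e_2)$, leaving only $(0,0)$ and $(+,-)$ up to permutation.
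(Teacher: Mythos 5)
Your overall architecture---control the dimension of $C^\pm$, enumerate the triples $(r,s,t)$ with $r,s\geq2$, and finish the $t\geq1$ subcases by a separate argument---is the same as the paper's, but the central inequality of your first step is stated and derived in the wrong direction, and the sign analysis collapses without the correct version. Krull's height theorem gives the \emph{lower} bound $\dim C^-\geq r-1-c^-$; combining it with the other lower bound $\dim C^-\geq1$ yields no information about $c^-$, and your conclusion $c^-\leq r-2$ is false in general (take $r=3$, $c=2$ with $f_1|_{\mathbf{y}=\mathbf{z}=0}=x_1^2$ and $f_2|_{\mathbf{y}=\mathbf{z}=0}=x_1^3$: the cone has dimension $2$ but $c^-=2>r-2$). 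What the proof actually needs is the \emph{opposite} inequality $c^-\geq r-2$, and it comes from the \emph{upper} bound $\dim C^-\leq1$, i.e.\ from condition C that only finitely many curves are contracted: since $\cc[\mathbf{x}]/(f_1|_{\mathbf{y}=\mathbf{z}=0},\ldots,f_c|_{\mathbf{y}=\mathbf{z}=0})$ must then have dimension $\leq2$ and each nonzero restriction cuts the dimension by at most one, at least $r-2$ of the restrictions are nonzero; each such $f_k$ contains a non-constant monomial in $\cc[\mathbf{x}]$ and hence has $e_k>0$. This, together with its mirror for $s$, is what forces for instance both $e_1,e_2>0$ when $(r,s,t)=(4,2,0)$. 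The lower bound $\dim C^-\geq1$ is used only to get $r\geq2$ directly (a curve inside $\proj\cc[x_1,\ldots,x_r]/(\cdots)$ requires $r\geq2$). Note also that even taken at face value your bounds cannot produce the listed signs: $c^+\leq s-2=0$ in the $(4,2,0)$ case only says no $f_k$ contains a pure $\mathbf{y}$-monomial, which does not imply $e_k\geq0$ (e.g.\ $x_1y_1y_2$ has negative weight whenever $b_1+b_2>a_1$).

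For the remaining sign eliminations when $t\geq1$, the operative constraint is again condition C rather than the isolated singularity of $0\in X$. For example, with $(r,s,t)=(3,2,1)$ and $e_2>0$, every monomial of $f_2$ lying in $\cc[\mathbf{y},z]$ has non-positive weight, so $f_1(\mathbf{0},\mathbf{y},z)=f_2(\mathbf{0},\mathbf{y},z)=0$ and $\pi_+$ has fibre $\pp^1$ over every point of the $z$-axis, giving infinitely many contracted curves; the case $e_2<0$ is excluded symmetrically via $\pi_-$, leaving $e_2=0$. This fibre-over-the-$z$-axis argument is the content behind the paper's ``the rest of the cases are similar'' and behind \cref{corc}; your appeal to ``$X$ would split as a product with a $z$-factor'' gestures at the right phenomenon but is not a proof. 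Once the inequality is corrected to $c^-\geq r-2$, $c^+\geq s-2$ and the $t\geq1$ cases are handled this way, your enumeration of triples does recover the stated list.
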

\begin{proof}
    Since $\pi_-$ only contracts finitely many curves, the dimension of $C^-$ is $1$, i.e.,
    \[
    \dim \frac{\cc[\mathbf{x}]}{(f_1(\mathbf{x},\mathbf{0},\mathbf{0}),\ldots,f_c(\mathbf{x},\mathbf{0},\mathbf{0}))}
    = 2.
    \]
    Thus, $r\geq2$. Similarly, $s\geq2$. When $r>2$, then there exist at least $r-2$ polynomials $f_k$ such that $f_k(\mathbf{x},0,\ldots,0)\neq0$. This implies that $f_k$ contains monomial in $\cc[\mathbf{x}]$, so $e_k>0$. The rest of the cases are similar. 
\end{proof}

\begin{cor}\label{corc}
    Suppose that $n\leq6$. Then $f_i$ contains a monomial in $\cc[z_1,\ldots,z_t]$ if $e_i=0$.
\end{cor}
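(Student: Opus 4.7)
The plan is to argue by contradiction: suppose $e_i = 0$ but $f_i$ contains no monomial purely in $\cc[z_1,\ldots,z_t]$, and derive a positive-dimensional singular locus of $X$ passing through $P$, contradicting the isolated singular point condition in the flip/flop definition.

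First I would carry out the weight bookkeeping. Any monomial $x^{\alpha} y^{\beta} z^{\gamma}$ of $f_i$ has weight $\sum_j \alpha_j a_j - \sum_k \beta_k b_k$, which must equal zero. Since all $a_j, b_k$ are positive, this forces $\alpha = 0$ if and only if $\beta = 0$; combined with the assumption excluding pure-$\mathbf{z}$ monomials, every monomial of $f_i$ must then carry both some $x_j$ and some $y_k$ factor. In particular $f_i$ and all its first-order partial derivatives vanish identically on the fixed locus $W \coloneqq \{\mathbf{x} = \mathbf{y} = 0\} \cong \cc^t$ of the $\cc^*$-action.

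Next I would check that $W \cap A$ has dimension at least one. For each $k$, the restriction $f_k|_W$ picks out the pure-$\mathbf{z}$ monomials of $f_k$, which automatically vanish whenever $e_k \neq 0$. Scanning the weight patterns of \cref{propc} for which some $e_i = 0$ --- namely $(+,+,-,-,0;0)$ when $n=5$, and $(+,+,+,-,-,0;+,0)$, $(+,+,-,-,-,0;-,0)$, $(+,+,-,-,0,0;0,0)$ when $n=6$ --- the resulting system $(f_k|_W)_{k=1}^c$ contributes at most one nonzero equation in $\cc[\mathbf{z}]$, so $W \cap A$ is at least one-dimensional and contains the origin. Since $W$ is exactly the fixed locus of $\cc^*$ on $\cc^n$, its image $\phi(W) \cong \cc^t$ embeds into $V \coloneqq \cc^n \gitquo \cc^*$ as the vertex of the natural affine cone structure, so $\phi(W) \subset \mathrm{Sing}(V)$. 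A local tangent-space count exploiting that $X$ has the expected codimension $c$ in $V$ then gives $\mathrm{Sing}(V) \cap X \subset \mathrm{Sing}(X)$, and hence $\phi(W \cap A) \subset \mathrm{Sing}(X)$ is a positive-dimensional subset passing through $P$ --- the desired contradiction.

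The main technical obstacle I foresee is justifying the two inclusions $\phi(W) \subset \mathrm{Sing}(V)$ and $\mathrm{Sing}(V) \cap X \subset \mathrm{Sing}(X)$ cleanly. Both are standard consequences of GIT fixed-point geometry, but I expect to verify them by a direct tangent-space computation using the explicit invariant generators of $\cc[V]$ for each of the finitely many weight patterns allowed by \cref{propc}, which is feasible precisely because $n \leq 6$ restricts the number of cases.
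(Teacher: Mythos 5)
Your opening observation is exactly the one the paper's proof turns on: if $e_i=0$ and $f_i$ has no pure-$\mathbf{z}$ monomial, then every monomial of $f_i$ involves both an $x_j$ and a $y_k$, so $f_i$ vanishes identically on $\{\mathbf{y}=\mathbf{0}\}$ (and on $\{\mathbf{x}=\mathbf{0}\}$), and your count showing that the system $(f_k|_W)_k$ cuts out something positive-dimensional in $W=\{\mathbf{x}=\mathbf{y}=\mathbf{0}\}$ is also correct. Where you diverge is in how the contradiction is extracted, and that is where the gap lies. The inclusion $\mathrm{Sing}(V)\cap X\subset\mathrm{Sing}(X)$ is not a formal consequence of $X$ having codimension $c$ in $V$: a subvariety of the correct codimension through a singular point of the ambient space can perfectly well be smooth there (a line through the vertex of a quadric cone), and the tangent-space count $\dim T_pX\ge\dim T_pV-c$ is only valid if the ideal of $X$ in $\cc[V]$ is \emph{locally generated by $c$ elements} at $p$. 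That holds when all $e_k=0$ (then $I\cap\cc[\mathbf{x},\mathbf{y},\mathbf{z}]^{\cc^*}=(f_1,\dots,f_c)\cc[\mathbf{x},\mathbf{y},\mathbf{z}]^{\cc^*}$), but in the weight forms $(+,+,+,-,-,0;+,0)$ and $(+,+,-,-,-,0;-,0)$ the invariant part of $(f_1,f_2)$ is generated by $f_2$ together with \emph{all} products $\mu f_1$ with $\wt\mu=-e_1$, which is more than $2$ generators, so the count as you state it does not close. It can likely be rescued by comparing the number of minimal invariant generators with the number of generators of the ideal case by case, but that is genuinely more delicate than you suggest, precisely because at fixed points the differentials of the quadratic invariant generators vanish and the chain rule gives no information.

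The detour through $\mathrm{Sing}(V)$ is also unnecessary: your own vanishing statement already yields the paper's contradiction directly. For a point $q=(\mathbf{0},\mathbf{0},\lambda)\in W\cap A$ with $\lambda\neq 0$, the fiber of $\pi_-$ over $\phi(q)$ is $\proj\cc[\mathbf{x}]/(f_1(\mathbf{x},\mathbf{0},\lambda),\dots,f_c(\mathbf{x},\mathbf{0},\lambda))$, and since $f_i(\mathbf{x},\mathbf{0},\lambda)=f_i|_W(\lambda)=0$ (every monomial of $f_i$ carries a $y$-factor) while at most one of the remaining restrictions is nonzero, this fiber has dimension $\ge 1$. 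That exhibits a curve contracted by $\pi_-$ to a point different from $P$, violating condition $\bm{C}$ with no tangent-space analysis of the quotient needed. I would recommend replacing the singular-locus argument by this fiber computation.
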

\begin{proof}
    We first consider the weight forms $(+,+,-,-,0,0;0,0)$. Notice that we can write $f_1=g_1(z_1,z_2)+\sum_{i,j} x_i y_j h_{i,j}$, for some polynomial $g_1$ and $h_{i,j}$ by $\wt f_1 =0$. Thus, $f_1(\mathbf{0},\mathbf{y},\mathbf{z}) = f_1(\mathbf{x},\mathbf{0},\mathbf{z}) = g_1(\mathbf{z})$. Similarly, let $g_2(\mathbf{z})\coloneqq  f_2(\mathbf{0},\mathbf{y},\mathbf{z}) = f_2(\mathbf{x},\mathbf{0},\mathbf{z})$. 
    
    Assume that $g_1$ is a zero polynomial. Pick a point $(\lambda_1,\lambda_2)\neq0$ in $(g_2=0)\subseteq\cc^2$. Then $f_i(x_1,x_2,0,0,\lambda_1,\lambda_2)=g_i(\lambda_1,\lambda_2)=0$ for $i=1,2$, and hence
    \begin{align*}
        \dim \pi_-^{-1}(0:0:0:0:\lambda_1,\lambda_2)
        & = \dim \proj \frac{\cc[x_1,x_2]}{(f_1(x_1,x_2,0,0,\lambda_1,\lambda_2),f_2(x_1,x_2,0,0,\lambda_1,\lambda_2))} \\
        & = \dim \proj \cc[x_1,x_2] \\
        & = 1.
    \end{align*}
    This is absurd since $\pi_-$ is isomorphic away from the origin. Thus, neither $g_1$ nor $g_2$ is $0$. 

    The other cases can be proved similarly.
\end{proof}

\begin{prop}[I]\label{propi}
    Let $\tau=\sum_i a_i - \sum_j b_j - \sum_k e_k$. Then the following are equivalent:
    \begin{enumerate}[label = (\arabic*)]
        \item $-K^-$ is $(\pi_-)$-ample [resp. $(\pi_-)$-trivial];
        \item $K^+$ is $(\pi_+)$-ample [resp. $(\pi_+)$-trivial];
        \item $\tau>0$ [resp. $\tau=0$].
    \end{enumerate}
\end{prop}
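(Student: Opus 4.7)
The plan is to identify both $K^-$ and $K^+$ with specific $\q$-Cartier divisors coming from $\cc^*$-characters and then to evaluate their intersections against contracted curves directly. Specifically, I would show $K^{\pm}\sim\mathcal{O}_{X^\pm}(-\tau)$, where $\mathcal{O}_{X^\pm}(k)$ denotes the reflexive sheaf associated to functions of $\cc^*$-weight $k$; and I would show $\mathcal{O}_{X^-}(1)\cdot\Gamma>0$ for every contracted curve $\Gamma\subseteq C^-$, while $\mathcal{O}_{X^+}(1)\cdot\Gamma'<0$ for every $\Gamma'\subseteq C^+$. The equivalences (1)$\Leftrightarrow$(3) and (2)$\Leftrightarrow$(3) then follow immediately by multiplying through.

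For the canonical-class computation, I would use that the quotient map $\pi\colon\cc^n\setminus B^-\to V^-$ is a principal $\cc^*$-bundle over the smooth orbifold locus. The relative tangent sheaf $T_\pi$ is trivialised by the Euler vector field $\sum_i a_i x_i\partial_{x_i}-\sum_j b_j y_j\partial_{y_j}$, which is $\cc^*$-invariant and hence is trivial as an equivariant line bundle. Passing to determinants in the sequence $0\to T_\pi\to T_{\cc^n}|_{\cc^n\setminus B^-}\to\pi^*T_{V^-}\to 0$ and comparing with the known equivariant weight $\sum a_i-\sum b_j$ of $\det T_{\cc^n}$ identifies $K_{V^-}\sim\mathcal{O}_{V^-}(-\sum a_i+\sum b_j)$. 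Standard adjunction for the complete intersection $X^-=(f_1=\cdots=f_c=0)\subseteq V^-$, with each $f_k$ a section of $\mathcal{O}_{V^-}(e_k)$, then yields $K_{X^-}\sim\mathcal{O}_{X^-}(-\tau)$; the identical computation for the opposite chamber gives $K_{X^+}\sim\mathcal{O}_{X^+}(-\tau)$.

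For the intersection numbers I would use the explicit description of $C^\pm$ recalled at the end of the previous section. Since $C^-$ is cut out from the weighted projective space $\pp(a_1,\ldots,a_r)=(\cc^r\setminus\{\mathbf{0}\})/\cc^*$ by the equations $f_k(\mathbf{x},\mathbf{0},\mathbf{0})=0$, and $\mathcal{O}_{X^-}(1)|_{C^-}$ is the restriction of $\mathcal{O}_{\pp(a_1,\ldots,a_r)}(1)$, which is ample because all $a_i>0$, we get $\mathcal{O}_{X^-}(1)\cdot\Gamma>0$ for every $\Gamma\subseteq C^-$. On the other side, reparametrising $\lambda\mapsto\lambda^{-1}$ identifies $(\cc^s\setminus\{\mathbf{0}\})/\cc^*$ with $\pp(b_1,\ldots,b_s)$ and turns the weight-$1$ character into the weight-$(-1)$ one, so $\mathcal{O}_{X^+}(1)|_{C^+}$ is the restriction of $\mathcal{O}_{\pp(b_1,\ldots,b_s)}(-1)$, which has strictly negative degree on every component, giving $\mathcal{O}_{X^+}(1)\cdot\Gamma'<0$.

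The only subtlety I anticipate is the standard one of running this argument on possibly singular $X^\pm$: the formulas above are equalities of reflexive sheaves, equivalently linear equivalences of $\q$-Weil divisors, and are to be established on the smooth orbifold locus (which is the complement of a codimension $\ge 2$ subset) and then extended by reflexivity. Since $X^\pm$ is a complete intersection in a $\q$-factorial toric variety, $K_{X^\pm}$ is $\q$-Cartier, so the intersection numbers on the left-hand sides make sense and the argument goes through as stated.
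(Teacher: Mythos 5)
Your argument is correct and is, in substance, the computation behind the result the paper invokes: the paper's own ``proof'' is a one-line citation of Corollary~3 in \cite{Br99}, and what you write out (descent of the canonical class via the Euler vector field and the relative tangent sequence, adjunction for the complete intersection to get $K_{X^\pm}\sim\mathcal{O}_{X^\pm}(-\tau)$, and then the sign flip of $\mathcal{O}(1)$ between $C^-\subseteq\pp(a_1,\ldots,a_r)$ and $C^+\subseteq\pp(b_1,\ldots,b_s)$) is essentially a self-contained reconstruction of Brown's argument. Two small points: the equivariant weight of $\det T_{\cc^n}$ is $-\sum a_i+\sum b_j$ (it is $K_{\cc^n}=\bigwedge dx_i\wedge dy_j\wedge dz_k$ that has weight $\sum a_i-\sum b_j$), though your conclusion $K_{V^-}\sim\mathcal{O}_{V^-}(-\sum a_i+\sum b_j)$ is the correct one; and you implicitly use that for a projective morphism contracting only finitely many curves, positivity of a $\q$-Cartier divisor on each contracted curve is equivalent to relative ampleness --- worth stating, but standard and unproblematic here.
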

\begin{proof}
    This proof directly follows from Corollary $3$ in \cite{Br99}.
\end{proof}

We next examine condition S, which is about the singularities. In fact, we have the classification of terminal singularities on $3$-fold:

\begin{thm}[Mori, Reid, Morrison--Stevens, Koll\'ar--Shepherd-Barron]\label{thms}
    Assume $X$ is a complex $3$-fold. The point $P\in X$ is a terminal singularity if and only if $P$ is smooth or the following forms (allowing permutations of $v_1$, $v_2$, $v_3$, $v_4$):
    \begin{itemize}
        \item[\bf(Q)] a quotient singularity with singular germ $\frac{1}{m}(v_1,v_2,v_3)$, where $m>1$, $\gcd(m,v_1 v_2 v_3)=1$ and $v_1+v_2\equiv0\pmod{m}$;
        \item[\bf(H)] a hyperquotient singularity of type $\frac{1}{m}(v_1, v_2, v_3, v_4;e)$ with $0\in(g=0)\subseteq\cc^4$ is an isolated singularity satisfying one of the conditions (H1-H3):
        \begin{itemize}
            \item[\bf(H1)] $m>1$, $\gcd(m,v_1 v_2 v_3)=1$ and $v_1+v_2\equiv v_4\equiv e \equiv0\pmod{m}$, conditions on $g$ as follows, where $q$ denotes the rank of the quadratic part of $g$ and $w_1$, $w_2$, $w_3$, $w_4$ are eigencooedinates of $\cc^4$.
            \begin{center}
            \begin{tabular}{lllll}
                \hline \hline
                \multicolumn{1}{c}{$m$} &
                \multicolumn{1}{c}{$q$} &
                \multicolumn{1}{c}{$\cc^*$-action} &
                \multicolumn{1}{c}{monomials in $g$} &
                \multicolumn{1}{c}{singularity type} \\
                \hline \hline
                $\geq2$ & $\geq2$ & $\frac{1}{m}(a,-a,b,0;0)$ & $w_1 w_2+h_2(w_3,w_4)$ & $cA/m$\\
                $3$ & $1$ & $\frac{1}{3}(2,1,1,0;0)$ & $w_4^2+w_1^3+h$ & $cD/3$ \\
                $2$ & $2$ & $\frac{1}{2}(1,1,1,0;0)$ & $w_1 ^2+w_4^2+h_2(w_2,w_3)$ & $cAx/2$ \\
                $2$ & $1$ & $\frac{1}{2}(1,1,1,0;0)$ & several possibilities &  \\
                \hline \hline
            \end{tabular}
            \end{center}
            where $h$ denotes a choice of one of the following polynomials $\{w_2^3+w_3^3,w_2^2 w_3+w_1 h_4(w_1,w_3)+h_6(w_1,w_3),w_2^3+w_1 h_4(w_1,w_3)+h_6(w_1,w_3)\}$ and $h_k$ denotes a polynomial of degree at least $k$;
            \item[\bf(H2)] $\frac{1}{4}(3,2,1,1;2)$ or $\frac{1}{4}(1,2,3,3;2)$ and the $2$-jet of $g$ is either $w_1^2+w_3^2$ or $w_1^2+w_3 w_4$;
            \item[\bf(H3)] $m=1$ and $0\in(g=0)$ is a cDV singularity.
        \end{itemize}
    \end{itemize}
\end{thm}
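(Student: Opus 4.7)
The plan is to assemble the classification from four classical inputs: the index-one cover construction, Reid's characterization of Gorenstein terminal threefold singularities as isolated cDV points, the Reid--Tai criterion for terminality of cyclic quotients, and the case analysis of Mori, Morrison--Stevens, and Koll\'ar--Shepherd-Barron. I would sketch how the pieces fit together rather than reproving anything from scratch.

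First I would reduce to a Gorenstein problem via the index-one cover. Since $P\in X$ is terminal, $K_X$ is $\q$-Cartier of some index $m\ge 1$. The canonical cyclic cover $\tilde X\to X$ of degree $m$ is a $\mu_m$-cover, \'etale in codimension one, and $\tilde X$ has a Gorenstein terminal singularity at the preimage $\tilde P$ of $P$. So it suffices to (a) classify Gorenstein terminal threefold germs, and (b) classify those $\mu_m$-actions on such a germ for which the quotient is still terminal with isolated fixed locus.

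For (a), I would apply Reid's general elephant theorem: a Gorenstein terminal threefold singularity is either smooth or an isolated hypersurface $0\in(g=0)\subset\cc^4$ whose generic hyperplane section is a Du Val (ADE) surface singularity, i.e., an isolated cDV point; when $m=1$ this already gives case \textbf{(H3)}. For (b) I would split according to whether $\tilde P$ is smooth or cDV. If smooth, then locally $X\simeq\cc^3/\mu_m$ with a linear diagonal action $\tfrac1m(v_1,v_2,v_3)$, and the Reid--Tai criterion ($\sum_i\{k v_i/m\}>1$ for all $1\le k<m$) combined with the Morrison--Stevens classification forces, up to permutation, $\gcd(m,v_i)=1$ for all $i$ and $v_1+v_2\equiv 0\pmod m$. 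This is exactly case \textbf{(Q)}. If $\tilde P$ is cDV, then $\mu_m$ acts diagonally on $\cc^4$ with weights $(v_1,\ldots,v_4)$ multiplying $g$ by some character $\lambda^e$; a parallel Reid--Tai-type constraint together with the requirement that $(g=0)$ remain isolated and that the action be free outside the origin slices the options into the rows of \textbf{(H1)} and the two families of \textbf{(H2)}.

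The hardest step is the final case analysis in (b) for the cDV cover: for each candidate $m\ge 2$ and each rank $q\in\{1,2,3,4\}$ of the quadratic part of $g$, one must enumerate the admissible weight tuples $(v_1,\ldots,v_4;e)$, normalize $g$ into one of the tabulated shapes, and rule out everything else via a Reid--Tai obstruction or a non-isolatedness obstruction. The high-rank entries ($q\ge 2$, yielding $cA/m$) are the easiest, since the quadratic part diagonalizes to $w_1 w_2$; the rank-one cases ($cD/3$ and $cAx/2$, and their index-two companions) are already delicate; and the exceptional $m=4$ families of \textbf{(H2)} are notoriously subtle, historically requiring the full strength of Koll\'ar--Shepherd-Barron's refinement of Mori's original analysis. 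Closing this residual case is the last real obstacle to completing the table.
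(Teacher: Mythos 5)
This theorem is quoted in the paper as a known classification, attributed to Mori, Reid, Morrison--Stevens, and Koll\'ar--Shepherd-Barron, and the paper gives no proof of it; there is therefore no in-paper argument to compare yours against. Your outline reproduces the standard architecture of the classical proof: pass to the index-one cover $\tilde X\to X$, which is \'etale in codimension one and Gorenstein terminal; invoke Reid's theorem that Gorenstein terminal threefold points are precisely the isolated cDV hypersurface points (this is \textbf{(H3)} when $m=1$); handle the case of a smooth cover by the Reid--Tai criterion together with the terminal lemma of Morrison--Stevens to land in \textbf{(Q)}; and reduce \textbf{(H1)}--\textbf{(H2)} to an equivariant analysis of $\mu_m$-actions on cDV germs $0\in(g=0)\subseteq\cc^4$. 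These reductions are correct and correctly attributed.

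As a proof, however, the proposal is incomplete exactly where you say it is: the enumeration of admissible weight tuples $(v_1,\ldots,v_4;e)$ and normal forms of $g$ for the cDV cover --- in particular the rank-one quadratic-part cases leading to $cD/3$ and $cAx/2$, and the exceptional $\frac{1}{4}(3,2,1,1;2)$ and $\frac{1}{4}(1,2,3,3;2)$ families of \textbf{(H2)} --- is the entire technical content of Mori's classification and of Koll\'ar--Shepherd-Barron's completion of it, and it is named but not carried out. You flag this as ``the last real obstacle,'' which is honest, but it means your text is an annotated citation rather than a proof. Since the paper itself uses the statement as a black box, that is the appropriate level of rigor for its role here; it should simply be presented as a cited classification with a sketch of its provenance, not claimed as a self-contained argument.
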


First of all, if $P_1$ in $X^-$, then we have a local isomorphism
\[
(P_1\in X^-) \cong \bigg(0\in\frac{(\bar{f}_1=\cdots=\bar{f}_c=0)}{\frac{1}{a_1}(a_2,\ldots,a_r,-\mathbf{b},\mathbf{0};\mathbf{e})}\bigg)
\]
by direct computation, where $\bar{f}_k\coloneqq f_k|_{x_1=1}$. By the above classification, one sees that a terminal singularity is a quotient of hypersurface of multiplicity $2$. It follows that there is at most one of $\{\bar{f}_k\} _{k=1}^{c}$ has order $2$ and at least $c-1$ of $\{\bar{f}_k\}_{k=1}^{c}$ has order $1$. 

Say that $\bar{f}_2,\ldots,\bar{f}_c$ have linear terms, and note that those are linearly independent. Hence, we may assume $\omega_k\in \bar{f}_k$ for all $k\in\{2,\ldots,c\}$, where $x_1,\omega_2,\ldots,\omega_c,u_1,u_2,u_3,u_4$ is a rearrangement of coordinates $x_1,\ldots,x_r,y_1,\ldots,y_s,z_1,\ldots,z_t$. Then the singular germ of $P_1$ is $\frac{1}{a_1}(\wt u_1,\wt u_2,\wt u_3,\wt u_4;e_1)$. Similarly, we have a local isomorphism near $P\in C^-$ by a similar computation. Thus, we can use the classification to find weights and monomials in $f_i$. 

\section{Classification of GIT Flips and Flops}

In this section, we use numerical conditions to classify GIT flips and flops. Our strategy is as follows: First, we start with a weight form in \cref{propc}. Next, we consider two cases $P_1\notin X^-$ and $P_1\in X^-$. If $P_1\notin X^-$, then we have $x_1^m\in f_k$ for some $k$; if $P_1\in X^-$, then we discuss the singular type of $P_1$. Repeat this process at the points $P_2, \ldots, P_{r+s}$ if necessary. Then we can find all the possibilities. 

In the following discussions, we assume that $X^-\to X\gets X^+$ is a reduced GIT flip or flop, i.e., any polynomial $f_j$ has no linear term. Now, we need the following useful Lemma.

\begin{lem}\label{monomial}
    Assume $f_i$ has no linear term for all $i$. Then there exist a coordinate $\omega_k$ and a non-constant monomial $\mu_k\in\cc[\mathbf{x}]$ such that $\mu_k\omega_k\in f_k$ for all $k$. Moreover, $\omega_i\neq\omega_j$ if $\wt \omega_i\leq0$ and  $i\neq j$. 
    
    Similarly, there are $\eta_j \nu_j \in f_j$ for some coordinates $\eta_j$ and non-constant monomials $\nu_j\in \cc[\mathbf{y}]$. Moreover, $\eta_i\neq\eta_j$ if $\wt\eta_i\geq0$ and $i\neq j$.
\end{lem}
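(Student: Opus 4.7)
My plan is to split the claim into two parts: (i) each $f_k$ contains a monomial of total $(\mathbf{y},\mathbf{z})$-degree at most $1$; and (ii) the pivots $\omega_k$ of weight $\le 0$ can be chosen to be distinct coordinates. Under the reduced hypothesis and $\cc^*$-homogeneity, any monomial of $f_k$ with $(\mathbf{y},\mathbf{z})$-degree $\le 1$ automatically has $\mathbf{x}$-degree $\ge 1$, hence factors as $\mu_k\omega_k$ with $\mu_k\in\cc[\mathbf{x}]$ non-constant and $\omega_k$ a single coordinate; so (i) immediately yields the first half of the lemma.

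For part (i) I would argue by contradiction via the Jacobian criterion. Suppose $f_k\in(y_1,\ldots,y_s,z_1,\ldots,z_t)^2$. The affine cone $\widetilde{C}^-$ of $C^-$ coincides with $A\cap\{\mathbf{y}=\mathbf{z}=0\}$ and is a $2$-dimensional subvariety of $A$ since $\dim C^-=1$. Because $X^-$ has only isolated singularities by the $\mathbf{S}$-condition, the (necessarily $\cc^*$-invariant) singular locus of $A$ is at most $1$-dimensional, so a generic $p_0=(\mathbf{x}_0,\mathbf{0},\mathbf{0})\in\widetilde{C}^-$ lies in the smooth locus of $A$; the $c\times n$ Jacobian of $(f_1,\ldots,f_c)$ at $p_0$ thus has rank $c$. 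But every partial derivative $\partial f_k/\partial w$ already lies in $(\mathbf{y},\mathbf{z})$, hence vanishes at $p_0$, making the entire $k$-th row of the Jacobian zero and forcing rank $\le c-1$, a contradiction.

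For part (ii) I would invoke the terminal-singularity analysis preceding the lemma. If no $P_i$ lies on $X^-$, the weight forms of \cref{propc} force a pure power $x_j^m\in f_k$ for each $k$, so each $\omega_k$ may be taken as some $x_j$ of positive weight and the distinctness condition is vacuous. Otherwise, at a chosen $P_i\in X^-$, after relabeling, $\bar f_2,\ldots,\bar f_c$ have multiplicity $1$ with linearly independent linear parts, and a weight-respecting change of coordinates within each residue class modulo $a_i$ converts these $c-1$ independent linear forms into $c-1$ distinct coordinates, supplying distinct $\omega_2,\ldots,\omega_c$. For the remaining $\omega_1$ produced by part (i), the flexibility in which monomial of $f_1$ one selects (or, failing that, an ideal-theoretic substitution $f_1\rightsquigarrow f_1-\sum_{k\ge 2}h_k f_k$ with $\cc^*$-homogeneous $h_k$, which leaves $I$ and hence $A$ unchanged) allows one to avoid coincidence with $\{\omega_2,\ldots,\omega_c\}$ whenever $\wt\omega_1\le 0$. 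The symmetric statement about $\eta_j\nu_j\in f_j$ follows by applying the same reasoning to $C^+\subset X^+$, interchanging the roles of $\mathbf{x}$ and $\mathbf{y}$.

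The hardest step I anticipate is part (ii): the Jacobian criterion delivers existence cleanly, but guaranteeing distinctness of the $\omega_k$ of weight $\le 0$ requires combining the linear-independence output of the terminal-singularity classification (which only covers $c-1$ of the generators at a given fixed point) with careful use of the freedom to modify the remaining generator $f_1$ within the ideal $I$.
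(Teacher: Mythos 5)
Your part (i) is essentially the same idea as the paper's proof, transported upstairs: both arguments exploit that a general point $P\in C^-$ is a smooth point of $X^-$ (by condition S), and deduce that each $f_k$ must contain a monomial that is linear outside the $\mathbf{x}$-variables. The paper does this in the chart $X^-_{1,\ldots,\ell}$ after a monomial change of coordinates on the invertible $x_i$'s; you do it via the Jacobian of $(f_1,\ldots,f_c)$ at a generic point of $A\cap\{\mathbf{y}=\mathbf{z}=0\}$. Your version still owes one justification: that a generic point of this $2$-dimensional locus is a \emph{smooth point of $A$}. "Isolated singularities of $X^-$ implies $\dim\mathrm{Sing}(A)\le 1$" is not automatic, because a singular point of $A$ with nontrivial finite stabilizer could in principle map to a smooth point of the quotient; ruling this out is exactly the index/multiplicity step ($d=\gcd(a_1,\ldots,a_\ell)=1$) in the paper's proof, so you cannot simply wave at it.

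The genuine gap is part (ii). First, "if no $P_i$ lies on $X^-$ then each $f_k$ contains a pure power $x_j^m$" is false: $P_i\notin X^-$ only says \emph{some} $f_{k(i)}$ contains $x_i^{m_i}$, and a single generator can absorb all of these, leaving other generators with no pure $\mathbf{x}$-power and a possibly weight-$\le 0$ pivot. Second, the terminal-singularity analysis at a fixed point $P_i$ produces monomials of the special form $x_i^m\omega$, whereas part (i) at the generic point produces $\mu\omega$ with $\mu$ a monomial in several $x$-variables; these are different monomials of different $f_k$'s, and the lemma requires a \emph{single consistent choice} of $(\mu_k,\omega_k)_{k=1}^c$ with the distinctness property, so you cannot splice the two sources together without further argument. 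Third, the classification at $P_i$ only controls $c-1$ generators, and the proposed repair for the last one (re-choosing the monomial, or replacing $f_1$ by $f_1-\sum h_kf_k$) is not shown to terminate in a valid choice. The fix is that you do not need the fixed points at all: at your generic smooth point the Jacobian has rank exactly $c$ (not merely no zero row), so after the paper's coordinate change the $c$ equations have $c$ \emph{distinct} linear pivot coordinates $\omega'_1,\ldots,\omega'_c$; the pivots of weight $\le 0$ are genuine $\mathbf{y}$- or $\mathbf{z}$-coordinates (only the weight-$0$ Laurent combinations of the $x_i$ and the positive-weight coordinates get re-traced to some $x_i$), so their distinctness is inherited directly. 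That single generic-point computation yields both existence and distinctness, which is precisely the paper's route.
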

\begin{proof}
    Since $X^-$ has only isolated singularities, a general point $P\in C^-$ is smooth. We may write it as $P=(\lambda_1:\cdots:\lambda_\ell:0:\cdots:0)\in C^-$ with $\ell\leq r$ and $\lambda_k\neq0$ for all $k\leq\ell$. 
    
    Note that the weight function $\wt:M\coloneqq\{x_1^{m_1}\cdots x_\ell ^{m_\ell } \mid m_1,\ldots,m_\ell \in \z \} \cong \z^{\oplus\ell}\to\z$ is a $\z$-module homomorphism. Let $d$ be the positive generator of its image, and then $d=\gcd(a_1,\ldots,a_\ell )$ by Euclidean algorithm. Hence, there is a $x\in M$ such that $\wt x=d$. Let $\widetilde{z}_1,\ldots,\widetilde{z}_{\ell-1}$ be the $\z$-basis of its kernel. By the definition, $\wt \widetilde{z}_k=0$ for all $k$. Then $x,\widetilde{z}_1,\ldots,\widetilde{z}_{\ell-1}$ is a $\z$-basis of $M$, and hence 
    \[
    \cc[x_1^{\pm1},\ldots,x_\ell ^{\pm1}] = \cc[x^{\pm1},\widetilde{z}_1^{\pm1},\ldots,\widetilde{z}_{\ell-1}^{\pm1}].
    \]
    Now, rewrite the coordinate ring of the affine variety $X^-_{1,\ldots,\ell}\coloneqq\cap_{i=1}^\ell X^-_i$ as 
    \[
    \bigg(\frac{\cc[x^{\pm1},\widetilde{z}_1^{\pm1},\ldots,\widetilde{z}_{\ell-1}^{\pm1},x_{\ell +1},\ldots,x_r,\mathbf{y},\mathbf{z}]}{(f_1,\ldots,f_{c})}\bigg)^{\cc^*}.
    \]
    By the direct calculation, $\oo_{X^-}(X^-_{1,\ldots,\ell })$ is isomorphic to 
    \[
    \bigg(\frac{\cc[\widetilde{z}_1^{\pm1},\ldots,\widetilde{z}_{\ell-1}^{\pm1},\ldots,x_r,\mathbf{y},\mathbf{z}]}{(\bar{f}_1,\ldots,\bar{f}_c)}\bigg)^{\z/d\z}, 
    \]
    where $\bar{f}_k=f_k|_{x=1}$. 
    
    Say $(1:\lambda_1':\cdots:\lambda_{k-1}',0:\cdots:0)$ is the coordinates of $P$ under the coordinates system $x,\widetilde{z}_1,\ldots,\widetilde{z}_{\ell-1},x_{\ell+1},\ldots,x_r,\mathbf{y},\mathbf{z}$. Let $\bar{z}_j\coloneqq \widetilde{z}_j-\lambda_j'$, and then the coordinates of $P$ becomes $(1:0:\cdots:0)$ under the coordinates system $x,\bar{z}_1,\ldots,\bar{z}_{\ell-1},x_{\ell+1},\ldots,x_r,\mathbf{y},\mathbf{z}$. Now, $P$ is smooth, so its index and multiplicity are $1$. Thus, $d=1$ and there is a rearrangement $x,\omega_1',\ldots,\omega_c',u_1,u_2,u_3$ of coordinates $x,\bar{z}_1,\ldots,\bar{z}_{\ell-1},x_{\ell+1},\ldots,x_r,\mathbf{y},\mathbf{z}$ such that $\omega'_k$ in $\bar{f}_k$. Therefore, $X^-_{1,\ldots,\ell}$ is an analytic open neighborhood of $\cc^3_{u_1,u_2,u_3}$. 
    
    If $\omega'_k=\bar{z}_j$ for some $j$, then $x^m \widetilde{z}_1^{m_{1}}\cdots\widetilde{z}_{\ell-1}^{m_{\ell-1}}\bar{z}_j\in f_k$ for some $m,m_1,\ldots,m_{\ell-1}\in \z$. Hence, $\lambda_j' x^m \widetilde{z}_1^{m_{1}}\cdots\widetilde{z}_{\ell-1}^{m_{\ell-1}}\in f_k$, and this monomial comes from a non-constant monomial $x_1^{\alpha_1}\cdots x_\ell^{\alpha_\ell}$ for some $\alpha_1,\ldots,\alpha_\ell\in\z_{\geq0}$. In this case, we pick $\omega_k\coloneqq x_i$ and $\mu_k\coloneqq x_1^{\alpha_1}\cdots x_\ell^{\alpha_\ell}/x_i$ for some $i$ such that $\alpha_i\geq1$, so $\mu_k \omega_k\in f_k$. 

    If $\omega'_k\in\{x_{\ell +1},\ldots,x_r,\mathbf{y},\mathbf{z}\}$, then we choose $\omega_k\coloneqq\omega_k'$. By a similar argument, there is a monomial $\mu_k\in\cc[x_1,\ldots,x_\ell]$ such that $\mu_k \omega_k\in f_k$. More precisely, since $f_k$ has no linear term, so $\mu_k$ is not a constant. By our choices of $\omega_k$, if $\wt\omega_i\leq0$, then $\omega_i\neq\omega_j$ for $i\neq j$. 
\end{proof}

From now on, we also assume $a_1\geq a_i$ for all $i$.

\begin{cor}\label{lw}
    Assume $f_i$ has no linear term for all $i$. Then $P_1\in X^-$. Similarly, $P_{r+1}\in X^+$ if $b_1\geq b_j$ for all $j$.
\end{cor}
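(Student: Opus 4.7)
The plan is to argue by contradiction and derive an impossible coordinate weight from \cref{monomial}. Assume for contradiction that $P_1 \notin X^-$. Since $P_1 = (1,0,\ldots,0) \notin B^-$, this can only mean $P_1 \notin A$, i.e., some $f_k(1,0,\ldots,0) \neq 0$. Hence $f_k$ contains a monomial that is a pure power of $x_1$, say $c\,x_1^m$. The reduced hypothesis forbids a linear term and the fact that $0 \in X$ forces $f_k(0)=0$, so $m \geq 2$; therefore $e_k = m\,a_1 \geq 2a_1$.

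Next I would invoke the ``dual'' half of \cref{monomial}: the same polynomial $f_k$ must also contain a monomial of the shape $\eta_k \nu_k$ with $\eta_k$ a coordinate and $\nu_k$ a non-constant monomial in $\cc[\mathbf{y}]$. Since every $y_j$ has weight $-b_j < 0$, we get $\wt \nu_k < 0$, and homogeneity $\wt \eta_k + \wt \nu_k = e_k$ then yields $\wt \eta_k > e_k \geq 2a_1$. But $\eta_k$ is a single coordinate, and under the standing convention $a_1 = \max_i a_i$ the maximum weight of any coordinate is $a_1$, giving the contradiction $\wt \eta_k \leq a_1 < 2a_1$. Hence $P_1 \in X^-$.

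For the second assertion $P_{r+1} \in X^+$ under $b_1 = \max_j b_j$, I would run the mirror argument. If $f_k$ had a pure $y_1$ monomial $c\,y_1^m$, then $m \geq 2$ for the same linear/constant reasons and $e_k = -m\,b_1 \leq -2b_1$; the other half of \cref{monomial} supplies a monomial $\mu_k \omega_k$ with $\mu_k \in \cc[\mathbf{x}]$ non-constant, forcing $\wt \omega_k = e_k - \wt \mu_k < e_k \leq -2b_1$, which is impossible because $-b_1$ is the smallest coordinate weight.

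I do not anticipate any real obstacle: the corollary is essentially a one-line weight count once \cref{monomial} is available. The only point of care is to deploy the ``right'' half of \cref{monomial} in each direction, namely the one whose distinguished factor ($\nu_k \in \cc[\mathbf{y}]$ or $\mu_k \in \cc[\mathbf{x}]$) carries weight of opposite sign to the pure monomial one wants to rule out, so that the leftover coordinate factor is driven into an impossible weight regime.
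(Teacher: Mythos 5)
Your proposal is correct and follows essentially the same route as the paper: contradiction via a pure power $x_1^m\in f_k$ with $m\ge 2$, then the $\eta\nu$ half of \cref{monomial} together with $\wt\nu<0$ and the standing convention $a_1=\max_i a_i$ to force the impossible chain $a_1\ge\wt\eta>e_k=ma_1\ge 2a_1$. The mirror argument for $P_{r+1}\in X^+$ is likewise what the paper intends by ``similarly.''
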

\begin{proof}
    Suppose on the contrary that $P_1\notin X^-$. Then one of $f_i$ contains $x_1^m$ for some $m\in\z$. By assumption, $f_i$ has no linear term, so $m\geq2$. By \cref{monomial}, there is a monomial $\nu\in\cc[y_1,\ldots,y_s]$ and a coordinate $\eta$ such that $\eta \nu\in f_i$. Now, $\wt \nu < 0$, so
    \[
    a_1 > \wt\eta \geq \wt\eta + \wt \nu = e_i = m a_1 \geq 2a_1,
    \]
    which is a contradiction. Thus, $P_1\in X^-$.
\end{proof}

\begin{cor}\label{isosing}
    Assume $f_i$ has no linear term for all $i$. Then $a_1>a_j$ for all $j\neq1$ if $a_1>1$. 
\end{cor}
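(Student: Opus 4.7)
The plan is to argue by contradiction: suppose that $a_1 = a_j$ for some $j \in \{2, \ldots, r\}$ with $a_1 > 1$. The strategy is to exhibit a $\pp^1 \subseteq X^-$ every point of which carries a non-trivial $\cc^*$-stabilizer $\mu_{a_1}$, thereby producing a one-dimensional singular locus in $X^-$ and contradicting the fact that terminal threefold singularities are isolated. The first ingredient is a weight bound drawn from the second half of \cref{monomial}: for each $k$ there exist a coordinate $\eta_k$ and a non-constant monomial $\nu_k \in \cc[\mathbf{y}]$ with $\eta_k \nu_k \in f_k$. Since $\nu_k$ is non-constant in the $y_i$'s, $\wt \nu_k \leq -\min_i b_i < 0$, and the standing assumption $a_1 \geq a_i$ forces $\wt \eta_k \leq a_1$; hence $e_k = \wt \eta_k + \wt \nu_k < a_1$ for every $k$.

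Next I would introduce the two-plane $L = \{v \in \cc^n : v_\ell = 0 \text{ for } \ell \notin \{1, j\}\} \cong \cc^2$, on which $\cc^*$ acts with weights $(a_1, a_1)$. Any nonzero monomial in $f_k|_L \in \cc[x_1, x_j]$ has the form $x_1^{\alpha_1} x_j^{\alpha_j}$ with $\alpha_1 + \alpha_j \geq 2$ --- because $f_k$ has neither constant nor linear terms under the reducedness hypothesis --- and therefore has weight $(\alpha_1 + \alpha_j) a_1 \geq 2 a_1 > e_k$, which is impossible. So $f_k|_L \equiv 0$ for all $k$, giving $L \subseteq A$. Since $L \cap B^- = \{0\}$, the GIT quotient $(L - \{0\})/\cc^*$ embeds as a $\pp^1$ inside $X^-$.

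To close the argument, observe that $\mu_{a_1} \subset \cc^*$ acts trivially on $L$ (because $\zeta^{a_1} = 1$ while both $x_1, x_j$ have weight $a_1$), so any $Q \in L - \{0\}$ with $x_1, x_j$ both nonzero has $\stab(Q) = \mu_{a_1}$, which is non-trivial since $a_1 > 1$. A transverse slice $S_Q$ to $\cc^* \cdot Q$ inside $A$ contains the one-dimensional direction tangent to $L$ and transverse to the orbit, and this direction is pointwise fixed by $\mu_{a_1}$; hence the local model of $X^-$ at $[Q]$ is a three-dimensional quotient $S_Q/\mu_{a_1}$ with a zero-weight eigendirection, giving a one-dimensional singular locus in the germ. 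As $[Q]$ sweeps the embedded $\pp^1$, this produces at least a one-dimensional singular locus in $X^-$, contradicting the isolatedness of terminal threefold singularities from \cref{thms}. The main (though mild) obstacle is ensuring that $\mu_{a_1}$ acts effectively --- not as a pseudo-reflection --- on the two directions of $S_Q$ transverse to $L$, since otherwise $\stab(Q)$ would strictly exceed $\mu_{a_1}$; this check is routine once $Q$ is chosen so that $x_1, x_j$ are both nonzero.
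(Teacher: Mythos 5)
Your argument is correct and is essentially the paper's own proof run in the contrapositive direction: the paper uses the isolatedness of the singularities of $X^-$ along the line $\overline{P_1P_j}$ to force some $f_k$ to contain a monomial $x_1^\ell x_j^m$ and then derives the weight contradiction $a_1>e_k\geq 2a_1$, whereas you apply the bound $e_k<a_1$ first to force $f_k|_L\equiv0$ and then contradict isolatedness. The two ingredients --- the estimate $e_k<a_1$ coming from \cref{monomial} and the curve of index-$a_1$ points on $\overline{P_1P_j}$ --- are exactly those of the paper, which, incidentally, also passes silently over the quasi-reflection check you flag at the end.
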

\begin{proof}
    Suppose on the contrary that $a_1=a_2>1$. Then any point in the line $\overline{P_1 P_2}$ has index $a_1>1$, so it is a singularity in $V^-$. However, $X^-$ has only isolated singularities, so the intersection of $X^-$ and $\overline{P_1 P_2}$ is a finite set. Now, pick $P=(\lambda_1:\lambda_2:0:\cdots:0)\notin X^-$, then there exists $x_1^\ell x_2^m \in f_j$ for some $j$ and integers $\ell,m\geq0$. Since $f_j$ has no linear term, we get $a_1>e_j=\ell a_1+m a_2\geq 2 a_1$, which is a contradiction. Therefore, $a_1>a_j$ for all $j\neq1$.
\end{proof}

\subsection{Hypersurface Case}

Now, we classify flops of the hypersurface case. In this part, we simplify the notation by setting  $f \coloneqq f_1$, $e\coloneqq e_1$, and $z\coloneqq z_1$.

\begin{thm}\label{thmflop5}
    Any reduced GIT flop has $\cc^*$-action 
    \[
    (1,1,-1,-1,0;0), 
    \]
    and it has the following forms:
    \[
    \begin{tabular}{lll}
        \hline \hline
        \multicolumn{1}{c}{polynomial $f$} &
        \multicolumn{1}{c}{Sing. in $X^\mp$} \\
        \hline \hline
            $x_2 y_2 + x_1 y_1 + z^2 p(z)$ & smooth \\
            $x_2 y_2+x_1 y_1 g(x_1,y_1,z) + z h(x_1,y_1,z)$ & $cA$ \\
        \hline \hline
    \end{tabular}
    \]
    where $p\in \cc[z]$, and $g, h\in (x_1,y_1,z)$, $\wt g = \wt h = 0$. 
\end{thm}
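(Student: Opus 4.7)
The strategy is to combine $\tau=0$ from \cref{propi} with the weight forms in \cref{propc} and the terminal-singularity classification \cref{thms} to pin down the $\cc^*$-action, and then to expand the most general weight-$0$ polynomial with no linear term into the two listed shapes. First I would rule out the forms $(+,+,+,-,-;+)$ and $(+,+,-,-,-;-)$ from \cref{propc}(i). In the first case $r=3$ and \cref{lw} gives $P_1\in X^-$, while \cref{isosing} forces either $a_1=1$ or $a_1>a_j$ for $j\neq1$. A local analysis at $P_1$ via \cref{thms} with $e>0$ requires $a_1\mid e$ and imposes sharp divisibility constraints on the other weights; combined with $\tau=0$ and the analogous analysis at the $X^+$-side point $P_4$, this over-constrains the system so badly that $f$ is forced to contain a linear monomial, contradicting reducedness. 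The case $(+,+,-,-,-;-)$ is handled symmetrically by swapping $\mathbf{x}\leftrightarrow\mathbf{y}$. Hence the action must be $(a_1,a_2,-b_1,-b_2,0;0)$, and $\tau=0$ gives $a_1+a_2=b_1+b_2$.

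To show $a_1=a_2=b_1=b_2=1$, suppose $a_1>1$. Then \cref{isosing} gives $a_1>a_2$, and the local model at $P_1\in X^-$ is a hyperquotient germ of type $\frac{1}{a_1}(a_2,-b_1,-b_2,0;0)$ cut out by $\bar f=f|_{x_1=1}$. Since $e=0$, \cref{thms} restricts the germ to class (H1); the $z$-coordinate (weight $0$) necessarily fills the $v_4$-slot, and the gcd condition $\gcd(a_1,a_2 b_1 b_2)=1$ together with the congruence $v_1+v_2\equiv 0\pmod{a_1}$, the inequality $a_2<a_1$, and $a_1+a_2=b_1+b_2$ leave $a_1=1$ as the only possibility. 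The $cD/3$ and $cAx/2$ sub-cases of (H1) fix $m\in\{2,3\}$ and a rigid weight pattern that cannot coexist with $\tau=0$ and positivity of $b_1,b_2$. A symmetric argument at $P_{r+1}\in X^+$ gives $b_1=1$, and then $a_1+a_2=b_1+b_2$ forces $a_2=b_2=1$.

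With the action $(1,1,-1,-1,0;0)$ fixed, a weight-$0$ monomial of $f$ has shape $x_1^\alpha x_2^\beta y_1^\gamma y_2^\delta z^n$ with $\alpha+\beta=\gamma+\delta$. By \cref{corc} some $z^m$ with $m\geq 2$ lies in $f$; by \cref{monomial} some monomial $\mu\omega\in f$ has $\mu\in\cc[x_1,x_2]$ non-constant and $\omega\in\{y_1,y_2\}$, and the minimal such shape $x_iy_j$ must actually occur or else $C^-$ would acquire a positive-dimensional component. A linear change inside each pair $(x_1,x_2)$ and $(y_1,y_2)$ (which preserves the action because the weights within each pair coincide) normalises $x_2y_2\in f$ with unit coefficient and eliminates any $x_2$- or $y_2$-linear cross terms against the remaining coordinates. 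If $x_1y_1\in f$ after this change, a Morse-type completion absorbs the remaining weight-$0$ cross terms into $z^2p(z)$, giving the smooth form $x_2y_2+x_1y_1+z^2p(z)$. Otherwise the weight-$0$ remainder sits in $(x_1,y_1,z)^2$ and regroups as $x_1y_1\,g(x_1,y_1,z)+zh(x_1,y_1,z)$ with $g,h\in(x_1,y_1,z)$, producing the $cA$ form.

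The hard part will be the arithmetic case-analysis in the second paragraph, since each sub-case of \cref{thms} imposes a rigid divisibility or weight pattern that must be separately checked against $\tau=0$ and the orderings $a_1\geq a_2$, $b_1\geq b_2$. The coordinate normalisation of the third paragraph is essentially routine once the leading $xy$-quadratic part of $f$ has been identified.
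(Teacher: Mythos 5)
Your overall strategy coincides with the paper's: use \cref{propi} ($\tau=0$), \cref{lw}, \cref{isosing} and the terminal classification \cref{thms} at $P_1$ to force the action $(1,1,-1,-1,0;0)$, then normalise $f$ via \cref{monomial} and \cref{corc}. The third paragraph (normalising to $f=x_2y_2+g$ with $g\in\cc[x_1,y_1,z]$ and splitting into the smooth/$cA$ cases according to whether $x_1y_1\in f$) matches the paper's treatment, and your observation that every non-constant weight-$0$ monomial of $\cc[x_1,y_1,z]$ automatically lies in $(x_1y_1,z)$ is if anything slightly cleaner than the paper's appeal to $\dim C^\pm=1$.

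The gap is in your first paragraph. You propose to eliminate the weight forms $(+,+,+,-,-;+)$ and $(+,+,-,-,-;-)$ \emph{before} knowing the weights are $\pm1$, and the stated mechanism --- ``this over-constrains the system so badly that $f$ is forced to contain a linear monomial'' --- is an assertion, not an argument. Moreover, the concrete $a_1=1$ argument you do supply in the second paragraph leans on the weight-$0$ coordinate $z$ occupying the $v_4$-slot of (H1), so it does not transfer to those two forms, which have no weight-$0$ coordinate. The paper avoids this by reversing the order: it first proves $a_1=1$ uniformly over all three weight forms via the congruence $\tau\equiv v_3\pmod{a_1}$ (in the quotient case the eliminated coordinate $\omega_4$, with $e=ma_1+v_4$, plays the role your $z$ plays; the H1 and H2 cases are handled the same way), whence every weight is $\pm1$ or $0$; only then do the forms with $e\neq0$ die instantly, because a weight-$\pm1$ monomial in variables all of weight $\pm1$ is linear, contradicting reducedness. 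Your proof becomes complete if you run your second-paragraph congruence computation first, in the form-independent version, and postpone the elimination of the $e\neq0$ forms until after $a_i=b_j=1$ is known. Two smaller points: your claim that $e=0$ ``restricts the germ to class (H1)'' skips the possibility that $P_1$ is a quotient singularity (ruled out here because the weight-$0$ coordinate $z$ would violate $\gcd(a_1,v_1v_2v_3)=1$) and the H2 case (ruled out because $e=0\not\equiv2\pmod 4$); both should be said explicitly.
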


\begin{proof}
    First, we claim that $a_1=1$. Suppose on the contrary that $a_1>1$. Note that $P_1\in X^-$ by \cref{lw}. By \cref{thms}, its singular type is either Q, H1, or H2 when $a_1>1$. Say $\{x_1, w_1, w_2, w_3, w_4\}$ is a permutation of coordinates system with weights $\{a_1, v_1, v_2, v_3, v_4\}$, respectively. When $P_1$ is a quotient singularity, then there is a linear term in $\bar{f}\coloneqq f|_{x_1=1}$. We may assume $x_1^m\omega_4\in f$ for some $m$. Hence, $P_1$ is of type $\frac{1}{a_1}(v_1, v_2, v_3)$. In particular, $\gcd(a_1,v_3)=1$ and $v_1+v_2\equiv0\pmod{a_1}$. On the other hand,
    \begin{align*}
        \tau 
        = a_1 + v_1 + v_2 + v_3 + v_4 - e 
        & = a_1 + v_1 + v_2 + v_3 + v_4 - (m a_1 + v_4) \\
        & \equiv v_3 \pmod{a_1}.
    \end{align*}
    By \cref{propi}, $\tau=0$ , and hence $a_1>1$ divides $v_3$, contradicting to $\gcd(a_1,v_3)=1$. Similarly, for H1 and H2 cases, we have $\tau \not\equiv 0 \pmod{a_1}$, which is a contradiction. Hence, $a_1=1$. 
    
    It follows that $a_i=1$ since we assume that $a_1\geq a_i$ for all $i$. By the same argument, $b_j=-1$ for all $j$. Moreover, the weight form is either $(1,1,-1,-1,0;0)$, $(1,1,1,-1,-1;1)$, or $(1,1,-1,-1,-1;-1)$ by \cref{propc} and \cref{propi}. However, if $e>0$, then $f$ contains a monomial $\mu\in\cc[x_1,x_2,x_3]$ by the proof of \cref{propc}. Now, $\wt \mu = \wt f = 1$, so $\mu=x_i$ for some $i$, which is a contradiction. Also, $e<0$ will lead to a similar contradiction. Therefore, the weight form is $(1,1,-1,-1,0;0)$. 

    Next, we find monomials of $f$ and the possible singularity types of the points in $X^\mp$. By \cref{monomial}, there exists a coordinate $\eta\in\{x_1,x_2,y_1,y_2,z\}$ and a non-constant monomial $\nu\in\cc[y_1,y_2]$ such that $\eta \nu\in f$. This shows that $\wt\eta>0$. It follows that $\eta=x_i$ for some $i$ and $\wt\eta=1$. Hence, $\wt\nu=-1$ and $\nu=y_j$ for some $j$. Without loss of the generality, we assume $f=x_2y_2+g$ for some homogeneous polynomial $g\in\cc[x_1,y_1,z]$. Now, the affine chart $X^-_2$ is $(f|_{x_2=1}=0)\subseteq\cc^4_{x_1,y_1,y_2,z}$. So $X^-_2 = (y_2+g=0) \cong \cc^3_{x_1,y_1,z}$. Hence, $C^-$ is smooth away from $P_1$. 
    
    If $P_1$ is smooth, then $\bar{f}$ contains linear term $\omega\in\{x_2,y_1,y_2,z\}$, i.e., $x_1^m \omega\in f$ for some $m\in\z_{>0}$. Thus, $m=1$ and $\omega=y_j$ for some $j$ since $e=0$. Moreover, $j=1$ since $x_1y_j\in g\in\cc[x_1,y_1,z]$, so we can write $f=x_2y_2+x_1y_1+h$ for some $h\in\cc[z]$. Besides, note that $z^2\mid h$ since $0\in X$ and $f$ have no linear terms. 
    
    Assume $P_1$ is cDV. Recall that
    \[
    C^- =\proj\frac{\cc[x_1,x_2]}{(f(x_1,x_2,0,0,0))} \quad
    \mbox{and} \quad
    C^+ =\proj\frac{\cc[y_1,y_2]}{(f(0,0,y_1,y_2,0))}
    \]
    have of dimension $1$, so $f(x_1,x_2,0,0,0)=f(0,0,y_1,y_2,0)=0$. This shows $g(x_1,0,0)=g(0,y_1,0)=0$, that is $g\in (y_1,z)\cap(x_1,z)=(x_1y_1,z)$. Thus, $g$ has form $x_1 y_1 \widetilde{g}(x_1,y_1,z) + z h(x_1,y_1,z)$ for some monomials $\widetilde{g},h$. Moreover, $\widetilde{g},h\in (x_1,y_1,z)$ because $\bar{f}$ has no linear term. 
\end{proof}

\begin{eg}
    Let $f = x_2 y_2 + z^m + x_1^\ell y_1^\ell$ for some $m,\ell\geq2$ and weights are $(1,1,-1,-1,0;0)$, then we have singularities in the following charts
    \begin{align*}
        & X^-_1 = (x_2 y_2 + z^m + y_1^\ell=0)\subseteq \cc^4_{x_2,y_1,y_2,z}, \\
        & X^+_1 = (x_2 y_2 + z^m + x_1^\ell=0)\subseteq \cc^4_{x_1,x_2,y_2,z}.
    \end{align*}
    $X^\mp$ has only one $cA$ singularity in this example. 
\end{eg}

\subsection{Codimension 2 Case}

We now investigate the codimension $2$ case. We will see that only the weight form $(+,+,-,-,0,0;0,0)$ (resp. $(+,+,+,-,-,0;+,0)$) give reduced GIT flops (resp. flips). The others can not. 

First, we classify the reduced GIT flips and flops with weight form $(+,+,-,-,0,0;0,0)$.

\begin{thm}\label{thmflop6}
    Suppose that the weight of a reduced GIT flips and flops of codimension $2$ is of the form $(a_1,a_2,-b_1,-b_2,0,0;0,0)$. Then it is
    \[
    (1,1,-1,-1,0,0;0,0).
    \]
    Moreover, polynomials $f_1,f_2$ has the following forms:\\
    \begin{center}
    \begin{tabular}{lll}
        \hline \hline 
        \multicolumn{1}{c}{} &
        \multicolumn{1}{c}{monomials in $f_1$} &
        \multicolumn{1}{c}{monomials in $f_2$} \\
        \hline \hline
            $(1)$ & $x_1y_1 + x_2 y_2 + g_1(z_1,z_2)$ & $x_1y_2 + x_2 y_1 + g_2(z_1,z_2)$ \\
            $(2)$ & $x_1y_1 + x_2 y_2 + g_1(z_1,z_2)$ & $x_2y_1 + g_2(z_1,z_2)$ \\
            $(3)$ & $x_1y_1 + g_1(z_1,z_2)$ & $x_2y_2 + g_2(z_1,z_2)$ \\
        \hline \hline
    \end{tabular}
    \end{center}
    where $g_i\in(z_1,z_2)-\{0\}$ for all $i$. 
\end{thm}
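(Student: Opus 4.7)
The plan is to follow the strategy of \cref{thmflop5} adapted to the codimension two setting, in three stages: first reduce all positive and negative weights to $1$; then apply \cref{monomial} to pin down the $x_i y_j$ monomials forced into $f_1, f_2$; and finally invoke \cref{corc} to obtain nonzero $g_k(z_1, z_2)$.

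\textbf{Stage 1: Showing $a_1=1$.} By \cref{lw}, $P_1 \in X^-$ and (after ordering the $b$'s) $P_3 \in X^+$. Assume for contradiction $a_1>1$. The germ $(P_1\in X^-)$ is locally a terminal hyperquotient of the complete intersection $(\bar f_1=\bar f_2=0)$ by the $\frac{1}{a_1}$-action on the remaining five coordinates $x_2,y_1,y_2,z_1,z_2$ with weights $(a_2,-b_1,-b_2,0,0)$. By the discussion following \cref{thms}, a terminal singularity has multiplicity $\leq 2$, so at least $c-1=1$ of $\bar f_1,\bar f_2$ carries a linear term; using that linear term to eliminate one coordinate reduces the germ to a hypersurface hyperquotient in four variables. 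The case-by-case argument from \cref{thmflop5} now applies verbatim: in each terminal type (Q, H1, H2), the congruences modulo $a_1$ imposed on the remaining weights combined with $\tau = a_1+a_2-b_1-b_2 = 0$ force $a_1$ to divide some weight $v_\ell$ that is required to be coprime to $a_1$. This contradicts $a_1>1$, so $a_1=1$. Consequently $a_2=1$ (from $a_1\ge a_2$), and symmetrically $b_1=b_2=1$, yielding the weight form $(1,1,-1,-1,0,0;0,0)$.

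\textbf{Stage 2: Monomial analysis.} By \cref{monomial}, each $f_k$ contains a monomial $\mu_k\omega_k$ with $\mu_k\in\cc[x_1,x_2]$ non-constant. Since $\wt f_k=0$ and $\wt \mu_k>0$, the coordinate $\omega_k$ must have negative weight, hence $\omega_k\in\{y_1,y_2\}$, and then $\wt \mu_k=1$ forces $\mu_k=x_{i_k}$. The \emph{moreover} clause gives $\omega_1\ne\omega_2$, so after relabeling we may take $\omega_1=y_1,\omega_2=y_2$. The $\mathbf{y}$-version of \cref{monomial} analogously produces monomials $x_{i'_k}y_{j'_k}\in f_k$ with $x_{i'_1}\ne x_{i'_2}$. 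A short case analysis on the pairs $(i_k,j_k),(i'_k,j'_k)$, under the residual symmetries $x_1\leftrightarrow x_2$, $y_1\leftrightarrow y_2$, $f_1\leftrightarrow f_2$, yields exactly three inequivalent configurations of forced $x_iy_j$ monomials, corresponding to rows (1)–(3) of the table.

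\textbf{Stage 3: The $g_k$ terms and finish.} Every other monomial in $f_k$ has weight $0$; by reducedness and the absence of a constant term, it is either another $x_iy_j$ (already accounted for) or lies in $(z_1,z_2)\subseteq\cc[z_1,z_2]$. Writing $g_k(z_1,z_2)$ for this summand, \cref{corc} (applied precisely to the $(+,+,-,-,0,0;0,0)$ form handled there) guarantees $g_1,g_2$ are both nonzero, giving $g_k\in(z_1,z_2)-\{0\}$ as claimed.

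The main obstacle is Stage~1. If both $\bar f_1$ and $\bar f_2$ happened to have multiplicity $\ge 2$, the reduction to a hypersurface germ would break down, and one would have to analyze a genuine codimension-two hyperquotient directly; the key input that avoids this is the multiplicity bound from the terminal singularity classification of \cref{thms}, ensuring at least one $\bar f_k$ is smooth so the hypersurface argument of \cref{thmflop5} transfers cleanly.
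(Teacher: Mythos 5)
Your three-stage skeleton (weights, then forced monomials, then nonvanishing of the $g_k$) matches the paper's, but Stages 1 and 2 each have a genuine gap. In Stage 1 you import the $\tau$-congruence mechanism of \cref{thmflop5} together with the assumption $\tau=a_1+a_2-b_1-b_2=0$. That assumption is not available: the statement covers flips as well as flops of this weight form, and the fact that $(+,+,-,-,0,0;0,0)$ only produces flops (i.e.\ $\tau=0$) is a \emph{conclusion}, obtained only after $a_1=a_2=b_1=b_2=1$ is proved. The paper's actual mechanism does not use $\tau$ at all: since $e_1=e_2=0$, a linear term $\omega$ of $\bar f_k=f_k|_{x_1=1}$ comes from $x_1^m\omega\in f_k$ and hence must have weight $-ma_1<0$, so $z_1,z_2$ can never be eliminated; the singular germ of $P_1$ is therefore of the form $\frac{1}{a_1}(*,*,0)$ or $\frac{1}{a_1}(*,*,0,0;*)$, and the coprimality condition $\gcd(m,v_1v_2v_3)=1$ in \cref{thms} fails for every permutation unless $a_1=1$. (Your closing worry about both $\bar f_k$ having multiplicity $\ge2$ is moot for the same reason: even the surviving hyperquotient germ carries two zero weights.) You should replace the congruence step by this zero-weight observation.

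In Stage 2, \cref{monomial} alone cannot produce the three rows. It is an existence statement: after normalization it yields $x_1y_1\in f_1$ and $x_2y_i\in f_2$ and nothing more; in particular it cannot force the \emph{second} monomial $x_2y_2\in f_1$ in rows $(1)$--$(2)$, nor $x_1y_2\in f_2$ in row $(1)$. The paper's trichotomy is driven by condition S at $P_1$ and $P_2$: whether $f_k|_{x_i=1}$ must acquire a linear term is dictated by whether $P_i$ is smooth or cDV, and the three rows are precisely the cases (both smooth), (one smooth, one cDV), (both cDV); the cDV--cDV case also needs the small argument that $x_2y_2\in f_2$, else neither $f_1|_{y_2=1}$ nor $f_2|_{y_2=1}$ has a linear term. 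Without this singularity input your ``short case analysis on the pairs'' is underdetermined. Stage 3 is essentially the paper's appeal to \cref{corc} and is fine, with the caveat that the table lists monomials that must occur rather than the full expansion of $f_k$, so weight-zero monomials such as $x_1^2y_1^2$ are not excluded.
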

\begin{proof}
    First, we claim that the weight form is $(1,1,-1,-1,0,0;0,0)$ and this gives a flop. Since $P_1$ is terminal, one of $\bar{f}_1$ and $\bar{f}_2$ contains some linear term. Without loss of the generality, say $x_1^m y_1 \in f_1$ by $e_1=0$. Notice that the singular germ of $P_1$ is of the form $\frac{1}{a_1}(*,*,0)$ or $\frac{1}{a_1}(*,*,0,0;*)$. By the classification of \cref{thms}. one has $a_1=1$. By the same argument, $a_i=b_j=1$ for all $i,j$. Thus, $\tau=0$.

    Second, we determine monomials in $f_1$ and $f_2$. Note that $g_i\coloneqq f_i(0,0,0,0,z_1,z_2) \neq 0$ for all $i$ by \cref{corc}. Besides, there exist a coordinate $\eta_j$ and a monomial $\nu_j\in\cc[y_1,y_2]$ such that $\eta_j \nu_j\in f_j$ by \cref{monomial}. Notice $e_j=0$ and $\wt \nu_j <0$, so $\eta_j\in\{x_1,x_2\}$. This shows that $\eta_1\neq\eta_2$ by \cref{monomial} and $\wt\eta_j>0$. Moreover, $\nu_j\in\{y_1,y_2\}$ for both $j$ since $e_j=0$. Without loss of generality, we may assume $f_1=x_1y_1+g$ for some $g\in\cc[x_2,y_2,z_1,z_2]$ and $x_2 y_i\in f_2$ for some $i$. 

    Finally, we discuss the following three cases:
    \begin{itemize}
        \item[\bf Case$\bf1$] Both $P_1$ and $P_2$ are smooth.\\
            For $j=1,2$, there exists $x_2^{m_j} \omega_j\in f_j$ for some integers $m_1,m_2$ and coordinates $\omega_1\neq\omega_2$ because $P_2$ is smooth. Notice that $e_j=0$ lead to $m_j=1$ and $\omega_j\in\{y_1,y_2\}$. On the other hand, the coordinate $\omega_1$ in the $f_1|_{x_2=1}=x_1y_1+g(1,y_2,z_1,z_2)$, so we have $\omega_1=y_2$. Moreover, $\omega_2=y_1$ due to $\omega_2\neq\omega_1=y_2$. Again, we may assume $f_1=x_1y_1+x_2y_2+g_1(z_1,z_2)$. Under this assumption, the linear term of $\bar{f}_1=f_1|_{x_1=1}$ is $y_1$, so the linear term of $\bar{f}_2$ contains $y_2$. This implies $x_1y_2\in f_2$ by $e_2=0$.
        \item[\bf Case$\bf2$] $P_1$ is cDV and $P_2$ is smooth.\\
            By the same approach in the previous case, since $P_2$ is smooth, we may assume that $f_1=x_1y_1+x_2y_2+g_1(z_1,z_2)$ and $x_2 y_1\in f_2$. 
        \item[\bf Case$\bf3$] Both $P_1$ and $P_2$ are cDV.\\
            In this case, we claim $x_2y_2\in f_2$. Assume the opposite, that is $x_2y_2\notin f_2$. Then $x_2y_1\in f_2$ since $x_2 y_i\in f_2$ for some $i$. Now, we have $x_1y_2\notin f_2$ and $x_2y_2\notin f_1$ since $x_1y_1\in f_1$, $x_2y_1\in f_2$, and neither $P_1$ nor $P_2$ is smooth. This implies that neither $f_1|_{y_2=1}$ nor $f_2|_{y_2=1}$ has linear terms, which is a contradiction. \qedhere
    \end{itemize}
\end{proof}

Now, we give examples in any cases of \cref{thmflop6}.

\begin{eg}[flop]\label{ex1flop6}
    Let $f_1 = x_1 y_1+x_2^2 y_2^2 + z_1^3$, $f_2 = x_2 y_2+x_1^2 y_1^2 + z_2^3$, and weights are $(1,1,-1,-1,0,0;0,0)$. Then there is one $cA$ singularity in the following charts.
    \begin{align*}
        & X^-_1 = (x_2 y_2+z_2^3+(x_2^2 y_2^2+z_1^3)^2=0)\subseteq \cc^4_{x_2,y_2,z_1,z_2}, \\
        & X^-_2 = (x_1 y_1+z_1^3+(x_1^2 y_1^2+z_2^3)^2=0)\subseteq \cc^4_{x_1,y_1,z_1,z_2}, \\
        & X^+_1 = (x_2 y_2+z_2^3+(x_2^2 y_2^2+z_1^3)^2=0)\subseteq \cc^4_{x_2,y_2,z_1,z_2}, \\
        & X^+_2 = (x_1 y_1+z_1^3+(x_1^2 y_1^2+z_2^3)^2=0)\subseteq \cc^4_{x_1,y_1,z_1,z_2}. 
    \end{align*}
    In this case, $X^\mp$ has two singularities of type $cA$.
\end{eg}

\begin{eg}[flop]\label{ex2flop6}
    Let $f_1=x_1 y_2 - x_2 y_1-z_2^2$, $f_2=x_2 y_2 + x_1^m y_1^m \omega + g(z_1,z_2)$ for some $\omega\in\{1,z_2\}$, and weights are $(1,1,-1,-1,0,0;0,0)$. Then both $X^-$ and $X^+$ have at most one singularity in the following charts
    \begin{align*}
        & X^-_1 = (x_2^2 y_1 + x_2 z_2^2 + y_1^m \omega + g(z_1,z_2)=0)\subseteq \cc^4_{x_2,y_1,z_1,z_2}, \\
        & X^+_1 = (x_1 y_2^2 + y_2 z_2^2 + x_1^m \omega + g(z_1,z_2)=0)\subseteq \cc^4_{x_1,y_2,z_1,z_2}.
    \end{align*}
    for suitable $(m,\omega,g)$. Now, we list some different $(m,\omega,g)$ such that the singularity $P_1$ is of type $cA$, $cD$, $cE_6$, $cE_7$, $cE_8$, respectively.
    \begin{table}[H]
        \centering
        \begin{tabular}{lllc}
            \hline 
            \multicolumn{1}{c}{$(m,\m{\omega})$} &
            \multicolumn{1}{c}{$g$} &
            \multicolumn{1}{c}{$X^-_1$} &
            \multicolumn{1}{c}{Sing. in $X^\mp$} \\
            \hline 
            $(m,\m{1})$ & $z_1^2+z_2^2$ & $(z_1^2+ \len{$x_2^2 y_1$}{z_2^2} + \len{$z_2 y_1^3$}{y_1^m} + x_2( x_2y_1 + z_2^2 ) = 0 )$ & $cA_{\phantom{6}}$ \\
            $(m,\m{1})$ & $z_1^2 + z_2^\ell$ & $(z_1^2 + x_2^2 y_1 + \len{$z_2 y_1^3$}{y_1^m} + \len{$x_2$}{z_2^2} (\len{$x_2y_1$}{z_2^{\ell-2}}+\len{$z_2^2$}{x_2})  = 0 )$ & $cD_{\phantom{6}}$ \\
            $(\m{4},\m{1})$ & $z_1^2 + z_2^3$ & $(z_1^2 + \len{$x_2^2 y_1$}{z_2^3} + \len{$z_2 y_1^3$}{y_1^4}  + x_2( x_2y_1 + z_2^2 ) = 0 )$ & $cE_6$ \\
            $(\m{3},z_2)$ & $z_1^2 + z_2^3$ & $(z_1^2 + \len{$x_2^2 y_1$}{z_2^3} + z_2 y_1^3  + x_2( x_2y_1 + z_2^2 ) = 0 )$ & $cE_7$ \\
            $(\m{5},\m{1})$ & $z_1^2 + z_2^3$ & $(z_1^2 + \len{$x_2^2 y_1$}{z_2^3} + \len{$z_2 y_1^3$}{y_1^5}  + x_2( x_2y_1 + z_2^2 ) = 0 )$ & $cE_8$ \\
            \hline 
        \end{tabular}
        \caption{Examples of GIT flops with cDV for codimension $2$ case}
        \label{flop6cDV}
    \end{table}
    For any example in the above table, $m,\ell \geq3$ and $P_1$ is the isolated singularity in $X^-_1$. Similarly, the unique singularity of $X^+$ is $P_3$.
\end{eg}

\begin{table}[H]
        \centering
        \begin{tabular}{cll}
            \hline 
            \multicolumn{1}{c}{Example} &
            \multicolumn{1}{c}{Sing. in $X^-$} &
            \multicolumn{1}{c}{Sing. in $X^+$} \\
            \hline 
            \ref{ex1flop6} & $cA$, $cA$ & $cA$, $cA$ \\
            \ref{ex2flop6} & cDV & cDV \\
            \hline 
        \end{tabular}
        \caption{Singularity types of examples for GIT flops of codimension $2$}
        \label{ego6}
\end{table}

Next, we discuss the weight form $(+,+,+,-,-,0;+,0)$. In this part, denote $z \coloneqq z_1$. 

\begin{thm}\label{thmflip6}
    Any reduced GIT flip or flop with weight form $(a_1,a_2,a_3,-b_1,-b_2,0;e_1,0)$ ($e_1>0$) has $\cc^*$-action 
    \[
    (a_1,a_2,a_3,-b_1,-a_2,0;a_1-b_1,0), 
    \]
    where $a_1>a_2,a_3,b_1>0$ such that $ a_2 a_3 | (a_1-b_1)$ and $a_1,a_2,a_3,b_1$ are pairwise coprime except $\gcd(a_1,b_1)$ may not be $1$. Moreover, it belongs to one of the following forms.\\
    \begin{center}
    \begin{tabular}{llll}
        \hline \hline
        \multicolumn{1}{c}{} &
        \multicolumn{1}{c}{monomials in $f_1$} &
        \multicolumn{1}{c}{monomials in $f_2$} &
        \multicolumn{1}{c}{$\cc^*$-action} \\
        \hline \hline
             $(1)$ & $x_1y_1+h(x_2^{a_3},x_3^{a_2})+x_i y_2^k $ & $x_2y_2 + z^m +x_jy_1^\ell$ & $(a_1,\m{1},a_3,-\m{1},-\m{1},0;a_1-\m{1},0)$ \\
             $(2)$ & $x_1y_1+h(x_2^{a_3},x_3^{a_2})+x_i y_2^k $ & $x_2y_2 + z^m$ & $(a_1,\m{1},a_3,-\m{b_1},-\m{1},0;a_1-\m{b_1},0)$ \\
             $(3)$ & $x_1y_1+h(x_2^{a_3},x_3^{a_2}) $ & $x_2y_2 + z^m+x_j y_1^\ell$ & $(a_1,\m{a_2},a_3,-\m{1},-\m{a_2},0;a_1-\m{b_1},0)$ \\
             $(4)$ & $x_1y_1+h(x_2^{a_3},x_3^{a_2}) $ & $x_2y_2 + z^m$ & $(a_1,\m{a_2},a_3,-\m{b_1},-\m{a_2},0;a_1-\m{b_1},0)$ \\
        \hline \hline
    \end{tabular}
    \end{center}
    for some integers $k,\ell\geq1$, $i\in\{1,3\}$, $j\in\{2,3\}$, $m\geq2$, and polynomial $h(u,v)$ such that $f_1(x_1,x_2,x_3,0,0,0)=h(x_2^{a_3},x_3^{a_2})$ and the ring $\cc[x_1,x_2,x_3]/h(x_2^{a_3},x_3^{a_2})$ is reduced. More precisely, $P_1$ is the unique singularity of $X^-$, which is $cA/a_1$ type.
\end{thm}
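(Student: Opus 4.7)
The plan is to carry out a case analysis at the extremal torus-fixed point $P_1$ and its counterpart on the $X^+$ side, exploiting the terminal classification of \cref{thms} together with \cref{monomial}, \cref{lw}, and \cref{isosing}. First, \cref{lw} puts $P_1 \in X^-$, and \cref{isosing} (together with $e_1>0$) gives $a_1>a_2,a_3$. Applying \cref{monomial} to $f_2$, whose weight is $0$, I note that the monomial factor $\mu_2 \in \cc[x_1,x_2,x_3]$ has positive weight, so the associated coordinate $\omega_2$ must lie in $\{y_1,y_2\}$ (ruling out $z$, which would make $\mu_2$ constant, and any $x_i$, which would make the weight positive); symmetrically on the $y$-side for $f_2$.

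The local germ at $P_1$ is the $\mu_{a_1}$-quotient of the complete intersection $\bar f_1 = \bar f_2 = 0$ in coordinates $x_2,x_3,y_1,y_2,z$ with weights $(a_2,a_3,-b_1,-b_2,0)$. For terminality at least one $\bar f_k$ must carry a linear term so that the germ reduces to a hypersurface. A linear term $x_1^m\omega \in f_2$ forces $\wt\omega=-ma_1$ with $m\geq 1$ (since $f_2$ has no linear term), i.e.\ $b_j=ma_1\geq a_1$; running the \cref{isosing}-style argument at $P_4,P_5$ shows $b_j<a_1$, so $\bar f_2$ has no linear term and $\bar f_1$ must. Writing that linear term as $x_1^my_j$, extremality of $a_1$ forces $m=1$, and after reindexing I take $j=1$, giving $x_1y_1\in f_1$ and $e_1=a_1-b_1$. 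Eliminating $y_1$ produces a hypersurface quotient with singular germ $\frac{1}{a_1}(a_2,a_3,-b_2,0;0)$ of multiplicity $2$; matching against \cref{thms} with the zero-weight coordinate $z$ playing the role of $w_4$, the only possibility is $cA/a_1$, which forces $b_2=a_2$, $\gcd(a_1,a_2a_3)=1$, and the presence of an $x_2y_2$ term in $f_2$ (lifted to $f_2$ since $e_2=0$).

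For the remaining monomials I would exploit the requirement that the transverse quadratic part $h_2(w_3,w_4)$ in the $cA/a_1$ normal form define an isolated singularity: along $z$ this produces a pure power $z^m\in f_2$, while along the $(x_2,x_3)$-direction the $\mu_{a_1}$-invariance combined with $\gcd(a_1,a_2)=\gcd(a_1,a_3)=1$ forces a polynomial $h(x_2^{a_3},x_3^{a_2})\in f_1$ of weight $a_1-b_1$, yielding the divisibility $a_2a_3\mid (a_1-b_1)$. Repeating the analysis at the symmetric extremal point of $X^+$ reconfirms $b_2=a_2$ and gives the pairwise coprimality of $a_2,a_3,b_1$, with $\gcd(a_1,b_1)$ left unconstrained exactly as in Brown's hypersurface case. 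The four sub-cases $(1)$--$(4)$ are then distinguished by terminality checks at the secondary fixed points $P_2\in X^-$ and $P_5\in X^+$: the optional monomials $x_iy_2^k$ and $x_jy_1^\ell$ are admissible precisely when $a_2=1$ or $b_1=1$ respectively, because otherwise their weights cannot balance $e_1,e_2$ with indices meeting the coprimality constraints.

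Uniqueness of $P_1$ as the singular locus of $X^-$ follows by inspecting the remaining affine charts, each of which becomes a smooth slice of an explicit complete intersection once the monomials above are in place. The principal obstacle is the bookkeeping in the second step: a priori the types Q, H1, and H2 of \cref{thms} all compete at $P_1$, and ruling out everything except $cA/a_1$ requires carefully matching weight residues modulo $a_1$ while coordinating the extremality bounds $b_j<a_1$ with the weight constraint $e_2=0$ in a way that avoids circularity.
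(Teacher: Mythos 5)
Your overall strategy is the same as the paper's (analyze $P_1$ against \cref{thms}, force $x_1y_1\in f_1$, reduce to a $cA/a_1$ hyperquotient, then read off the remaining monomials), but the step that actually produces the theorem's arithmetic conclusions is misattributed and therefore missing. You claim that $\mu_{a_1}$-invariance together with $\gcd(a_1,a_2)=\gcd(a_1,a_3)=1$ forces $f_1(x_1,x_2,x_3,0,0,0)=h(x_2^{a_3},x_3^{a_2})$ and that this ``yields'' $a_2a_3\mid(a_1-b_1)$. Invariance only says $\alpha a_2+\beta a_3=e_1$ for each monomial $x_2^{\alpha}x_3^{\beta}$; to conclude $a_3\mid\alpha$ and $a_2\mid\beta$ one needs $\gcd(a_2,a_3)=1$ \emph{and} $a_2a_3\mid e_1$ as inputs, not outputs. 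In the paper these come from a separate terminality/smoothness analysis at $P_2$, $P_3$ and at the points of $\overline{P_2P_3}\cap X^-$ (whether or not $P_i$ lies in $X^-$, one extracts $a_i\mid e_1$, and a point of $\overline{P_2P_3}$ of index $\gcd(a_2,a_3)>1$ would violate terminality). You defer all of this to a final ``inspection of the remaining charts,'' but it is a prerequisite for, not a consequence of, the shape of $h$ and the divisibility in the statement; and the claim that the analysis at the $X^+$-extremal point gives coprimality of $a_2,a_3$ cannot work, since those are $x$-weights invisible from the $y$-side. (Similarly, $z^m\in f_2$ comes from \cref{corc} via $e_2=0$, not from isolatedness of $h_2(w_3,w_4)$ --- $g=w_1w_2+w_3w_4$ is isolated with no pure power.)

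Several exclusions are also asserted without a working argument. You dismiss a linear term $x_1^m y_j\in\bar f_2$ by invoking an ``\cref{isosing}-style argument at $P_4,P_5$'' to get $b_j<a_1$; but \cref{isosing} compares $a_1$ with the other $a_i$ (or, symmetrically, $b_1$ with the other $b_j$) and gives no bound of a $b_j$ by $a_1$. In the paper $b_1<a_1$ only falls out \emph{after} $x_1y_1\in f_1$ is established (whence $e_1=a_1-b_1>0$), so using it earlier is circular. Likewise ``extremality of $a_1$ forces $m=1$'' is not a proof: the paper needs \cref{propi} to get $m\le 2$ and a separate germ computation to kill $m=2$; and ``the only possibility is $cA/a_1$'' skips ruling out $cD/3$ and the index-$2$ hyperquotient cases (the paper notes $a_1=e_1+b_1\ge 3$ and checks the $cD/3$ normal form against the weights). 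These are precisely the steps you flag as the ``principal obstacle,'' and they are where the content of the theorem lies; as written, the proposal records the correct destination but not a route that reaches it.
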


\begin{proof}
    First, we show that $P_1\in X^-$ is H1 type. Say $e_1>0$ and $f_1$ contains a monomial in $\cc[x_1,x_2,x_3]$ by the proof of Proposition S. This implies that $e_1\geq2$ since we assume $f_j$ has no linear terms. By \cref{propi}, $3a_1\geq a_1+a_2+a_3\geq b_1+b_2+e_1\geq4$, hence, $a_1>1$. Thus, $P_1$ is neither smooth nor H3. By \cref{isosing}, $a_1>a_i$ for all $i\neq1$. 
    
    When $P_1$ is a quotient singularity or H2, then one of $\bar{f}_1$ and $\bar{f}_2$ contains $z$; otherwise, the singular germ of $P_1$ is of type $\frac{1}{a_1}(*,*,0)$ or $\frac{1}{a_1}(*,*,*,0;*)$, which is a contradiction. Assume $x_1^\alpha z \in f_i$ for some $i$ and $\alpha\geq1$. Now, $e_i=\alpha a_1>0$, so this say $x_1^\alpha z \in f_1$. If $P_1$ is a quotient singularity, then we may assume that $x_1^\beta y_1\in f_2$ for some $\beta\geq1$. By \cref{propi}, we have
    \[
    3 a_1 > a_1+a_2+a_3\geq b_1+b_2+e_1 = b_2+(\alpha+\beta)a_1 \geq b_2 + 2 a_1.
    \]
    This implies that $0<a_2-b_2, a_3-b_2<a_1$ and $a_1<a_2+a_3<2a_1$. However, the singular germ of $P_1$ is $\frac{1}{a_1}(a_2,a_3,-b_2)$, and then $a_1$ divides $a_2+a_3$, $a_2-b_2$, or $a_3-b_2$, which is a contradiction. If $P_1$ is H2, then the singular germ of $P_1$ becomes $\frac{1}{a_1}(a_2,a_3,-b_1,-b_2;e_2=0)$, which is a contradiction. Hence, $P_1$ is H1. 
    
    Next, we determine monomials in $f_1$ and $f_2$. By \cref{corc}, there exists $z^m\in f_2$ for some $m\geq2$. Note that one of $f_1$ and $f_2$ contains $x_1^\ell\omega$ for some $\ell\in\z_{>0}$ and a coordinate $\omega\neq x_1$. By the weight reason, the possible situations are $x_1^\ell y_i\in f_1$, $x_1^\ell x_i\in f_1$, $x_1^\ell z\in f_1$, or $x_1^\ell y_i\in f_2$. 
    
    If $x_1^\ell y_1\in f_1$, then the singular germ of $P_1$ is $\frac{1}{a_1}(a_2,a_3,-b_2,0;e_2=0)$ and $e_1= \ell a_1 - b_1$. Thus, $a_2 + a_3 \geq b_2 + (\ell - 1) a_1$ by \cref{propi}. This shows that $\ell$ is $1$ or $2$. If $\ell=2$, then $a_1<a_2+a_3<2a_1$ and $0<a_3-b_2<a_1$, which is a contradiction. Therefore, $\ell=1$. By the same argument, if $\omega\neq y_i$ for some $i$, then we get contradictions, which are similar to the $\ell=2$ case. Thus, without loss of generality, we may assume $f_1=x_1 y_1-g(x_2,x_3,y_2,z)$.
    
    Now, we have
    \[
    X^-_1 = (f_2(1,x_2,x_3,g,y_2,z)=0)\subseteq \cc^4_{x_2,x_3,y_2,z}/\textstyle\frac{1}{a_1}(a_2,a_3,-b_2,0),
    \]
    and $P_1$ is H1 with singular germ $\frac{1}{a_1}(a_2,a_3,-b_2,0;0)$. This implies that
    \begin{align*}
        \tau
        = a_1+a_2+a_3-b_1-b_2-e_1
        & = a_1+a_2+a_3-b_1-b_2-(a_1-b_1)\\
        & = a_2+a_3-b_2\\
        & \not\equiv0\pmod{a_1}.
    \end{align*}
    By \cref{propi}, $\tau>0$ and $X^-\to X\gets X^+$ is a flip. 
    
    More precisely, we claim that $P_1$ is $cA/a_1$ and $x_i y_2\in f_2$ for some $i\neq1$. By \cref{thms}, $P_1$ is either $cA/a_1$, $cD/3$, or $a_1=2$. Notice that $a_1=e_1+b_1\geq3$ since $x_1 y_1\in f_1$ and $e_1>1$. 
    
    If $P_1$ is $cA/a_1$, then there are coordinates $\omega_1\neq\omega_2$ in $\{x_2,x_3,y_2,z\}$ such that $\omega_1\omega_2\in f_2(1,x_2,x_3,g,y_2,z)$ and $\wt\omega_1\equiv-\wt\omega_2\not\equiv0\pmod{a_1}$. This shows that either $x_1^\ell \omega_1\omega_2\in f_2$ for some $\ell\geq0$, or $\omega_1\omega_2\in g$ and $x_1^\ell y_1\in f_2$ for some $\ell\geq0$. By similar arguments of $x_1 y_j\in f_1$ and conditions $e_2=0$ and $0<\tau=a_2+a_3-b_1<2a_1$, we conclude that $x_i y_2\in f_2$ for some $i\neq1$. 
    
    For the $cD/3$ case, we also find some possible monomials in $f_2$ by \cref{thms}, and all situations are impossible by weight and degree reason. Without loss of generality, we say $x_2 y_2\in f_2$, and then $b_2=a_2$.

    Finally, we prove that any $P\in X^- -\{P_1\}$ is smooth. Note that $f_1(x_1,x_2,x_3,0,0,0)=-g(x_2,x_3,0,0)\in\cc[x_2,x_3]$ and denoted by $h_0(x_2,x_3)$. 
    
    If $P=P_2\in X^-$, then $x_2^m\notin h_0$, i.e., $x_3\mid h_0$. On the other hand, $x_3^2\nmid h_0$ since $C^-$ is reduced. This shows that $x_3\in h_0|_{x_2=1}$, and hence $x_3\in f_1|_{x_2=1}$. Thus, $P_2$ is smooth since $X^-_2=\cc^3_{x_1,y_1,z}/\frac{1}{a_2}(a_1,-b_1,0)$. Similarly, if $P=P_3\in X^-$, then $x_2\in f_1|_{x_3=1}$. Now, the singular germ of $P_3$ is $\frac{1}{a_3}(a_1,-b_1,-a_2)$ or $\frac{1}{a_3}(a_1,-b_1,-a_2,0;e_1)$, so it is not H2. Note $a_3\mid a_1-b_1-a_2$ since $x_1y_1+x_2\in f_1|_{x_3=1}$. Thus, $P_3$ is neither a quotient singularity nor H1. Moreover, $P_3$ is not cDV; otherwise, we can show that $P_3$ is not isolated, which is a contradiction. Hence, $P_3$ is smooth by \cref{thms}. Besides, if $P_i\notin X^-$, then $x_i^m\in f_1$ for some $m$ and hence $a_i\mid e_1$. 
    
    From now on, we have $P_i$ is smooth if and only if $P_i\in X^-$, $a_i\mid e_1$ ($i=2,3$), and $\gcd(a_1,a_2a_3)=1$ (since $P_1$ is H1). It is sufficient to show that $\gcd(a_2,a_3)=1$ and the rest points are smooth. Let $d=\gcd(a_2,a_3)$. If one of $P_2$ and $P_3$ is in $X^-$, then $\gcd(a_2,a_3)=1$. If $P_2,P_3\notin X^-$, then there exists $P\in\overline{P_2P_3}\cap X^-$ by the dimension reason. Notice that the singular germ of $P$ is one of forms $\frac{1}{d}(*,*,0,0;0)$ or $\frac{1}{d}(*,*,0)$ since $\wt z=e_2=0$ and $d$ divides $a_2$, $a_3$, and $e_1$. This shows that $P$ is smooth or cDV, so $\gcd(a_2,a_3)=d=1$. 
    
    Similarly, the rest points in $X^-$ are of index $1$. $P$ is not cDV by the changing coordinates and similar arguments of $P_2$ and $P_3$. Besides, we observe that $x_2^\alpha x_3^\beta\in f_1$ if and only if $a_3\mid \alpha$ and $a_2\mid \beta$ since $\gcd(a_2,a_3)=1$ and $a_2 a_3\mid e_1$. Hence, $h_0=h(x_2^{a_3},x_3^{a_2})$ for some polynomial $h(u,v)$ and $h$ is homogeneous with respect to degree.
\end{proof}

\begin{rmk}
In the above proof, since $C^-$ is reduced, $C^-$ has irreducible components $\Gamma_i^-\coloneqq (L_i(x_2^{a_3},x_3^{a_2})=0)\subseteq\pp(a_1,a_2,a_3)$, where $i=1,\ldots,e_1/(a_2a_3)$ and $L_i$ is a polynomial with degree $1$ such that $h=\Pi_i L_i$.

\begin{figure}[H]
    \centering
    \begin{tikzpicture}
        \def\ul{1}
        \draw[scale = \ul, fill = black] (-1.8,-1) circle [radius=.08];
        \draw[scale = \ul] (-2.2,-1.4) node {$P_2$};
        \draw[scale = \ul, fill = black] (1.8,-1) circle [radius=.08];
        \draw[scale = \ul] (2.2,-1.4) node {$P_3$};
        \draw[scale = \ul, fill = black] (0,2) circle [radius=.08];
        \draw[scale = \ul] (0,2.5) node {$P_1$};
        \draw[scale = \ul] (-2.2,-1) -- (2.2,-1); 
        \draw[scale = \ul] (-1.92,-1.2) -- (.12,2.2); 
        \draw[scale = \ul] (1.92,-1.2) -- (-.12,2.2);
        \draw[scale = \ul, red] (-1.6,-1.2) -- (.1,2.2);
        \draw[scale = \ul, red] (-1.28,-1.2) -- (.08,2.2);
        \draw[scale = \ul, red] (-.96,-1.2) -- (.06,2.2);
        \draw[scale = \ul, red] (-.64,-1.2) -- (.04,2.2);
        \draw[scale = \ul, red] (.15,.3) node {...}; 
        \draw[scale = \ul, red] (1.28,-1.2) -- (-.08,2.2);
        \draw[scale = \ul, red] (1.6,-1.2) -- (-.1,2.2);
        \draw[scale = \ul, red] (0,-1.5) node {$C^-=\cup_i \Gamma_i^-$}; 
    \end{tikzpicture}
    \caption{$C^-:h(x_2^{a_3},x_3^{a_2})=0$ in $\pp(a_1,a_2,a_3)$}
    \label{flip6curve}
\end{figure}
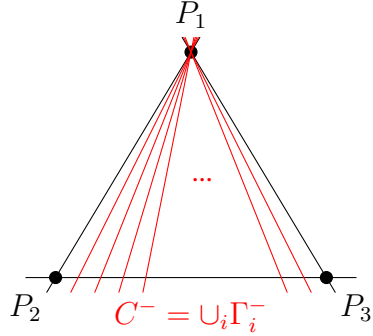
\end{rmk}

Again, we provide examples for all cases of \cref{thmflip6}.

\begin{eg}[flip]\label{ex1flip6}
    Let $f_1=x_1 y_1+ x_1 y_2 + x_2 x_3(x_2^{a-3}+x_3^{a-3}) $ and $f_2=x_2 y_2 + z^m + x_3 y_1 $, and weights are $(a,1,1,-1,-1,0;a-1,0)$ for some $a\geq3$. In this example, the unique singularity is $P_1$ in the chart
    \begin{align*}
        & X^-_1 = (x_2 y_2 + z^m - x_3g = 0)\subseteq \cc^4_{x_2,x_3,y_2,z}/\textstyle\frac{1}{a}(1,1,-1,0), 
    \end{align*}
    where $g=y_2+x_2 x_3(x_2^{a-3}+x_3^{a-3})$. 
\end{eg}

\begin{eg}[flip]\label{ex2flip6}
    Let $f_1=x_1 y_1+ x_2 x_3+ x_3 y_2^{b+1}$ and $f_2=x_2 y_2 + z^m + x_3^b y_1 $, and weights are $(b+2,1,1,-b,-1,0;2,0)$ for some $b\geq1$. Then charts with singularities are
    \begin{align*}
        & X^-_1 = (x_2 y_2 + z^m - x_3g = 0)\subseteq \cc^4_{x_2,x_3,y_2,z}/\textstyle\frac{1}{b+2}(1,1,-1,0) \\
        & X^+_1 = (x_2 y_2 + z^m - x_3g = 0)\subseteq \cc^4_{x_2,x_3,y_2,z}/\textstyle\frac{1}{a}(1,1,-1,0;0),
    \end{align*}
    where $g=x_3^b(x_2 + y_2^{b+1})$. 
\end{eg}

\begin{eg}[flip]\label{ex3flip6}
    Let $f_1=x_1 y_1 + x_2 ( x_2+x_3^a) $ and $f_2=x_2 y_2 + z^m + x_3 y_1 $, and weights are $(2a+1,a,1,-1,-a,0;2a,0)$ for some $a\geq1$. Then we have charts with singularities
    \begin{align*}
        & X^-_1 = (x_2 y_2 + z^{m\phantom{2}} - x_3g = 0)\subseteq \cc^4_{x_2,x_3,y_2,z}/\textstyle\frac{1}{2a+1}(a,1,-a,0;0), \\
        & X^+_2 = (x_1 y_1 + z^{2m} + x_3h = 0)\subseteq \cc^4_{x_1,x_3,y_2,z}/\textstyle\frac{1}{a}(1,1,-1,0;0), 
    \end{align*}
    where $g = x_2 ( x_2+x_3^a)$ and $h = x_3^a y_1 + x_3^{a-1} z^m + x_3 y_1^2 + 2y_1 z^m $.
\end{eg}

\begin{eg}[flip]\label{ex4flip6}
    Let $f_1=x_1 y_1 - ( x_2^2 - x_3^3) $ and $f_2=x_2 y_2 + z^m + x_3^b y_1^2 $, and weights are $(b+6,3,2,-b,-3,0;6,0)$ for some $b\geq1$ such that $\gcd(b,6)=1$. Then we have    
    \begin{align*}
        & X^-_1 = (x_2 y_2 + z^{m\phantom{2}} + x_3g = 0)\subseteq \cc^4_{x_2,x_3,y_2,z}/\textstyle\frac{1}{b+6}(3,2,-3,0;0), \\
        & X^+_1 = (x_2 y_2 + z^{m\phantom{2}} + x_3^b\phantom{g} = 0)\subseteq \cc^4_{x_2,x_3,y_2,z}/\textstyle\frac{1}{b}(3,2,-3,0;0), \\
        & X^+_2 = (x_1 y_1 + z^{2m} + x_3h = 0)\subseteq \cc^4_{x_1,x_3,y_2,z}/\textstyle\frac{1}{3}(b,2,-b,0;0), 
    \end{align*}
    where $g = x_3^{b-1}(x_2^3-x_3^2)^2 $ and $ h = x_3^2 - 2 x_3^{b-1} y_1^2 z^m - x_3^{2b-1} y_1^4$.
\end{eg}

\begin{table}[H]
        \centering
        \begin{tabular}{cll}
            \hline 
            \multicolumn{1}{c}{Example} &
            \multicolumn{1}{c}{Sing. in $X^-$} &
            \multicolumn{1}{c}{Sing. in $X^+$} \\
            \hline 
            \ref{ex1flip6} & $cA/a$ & smooth \\
            \ref{ex2flip6} & $cA/(b+2)$ & $cA/b$\\
            \ref{ex3flip6} & $cA/(2a+1)$ & $cA/a$ \\
            \ref{ex4flip6} & $cA/(b+6)$ & $cA/b$, $cA/3$ \\
            \hline 
        \end{tabular}
        \caption{Singularity types of examples for GIT flips of codimension $2$}
        \label{egi6}
\end{table}

Finally, we prove that the rest weight forms have no new examples. 

\begin{prop}\label{propc6}
    Any reduced GIT flip or flop of codimension $2$ has one of the following weight forms
    \[
    (+,+,+,-,-,0;+,0), \mbox{ or } (+,+,-,-,0,0;0,0).
    \]
\end{prop}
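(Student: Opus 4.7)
The strategy is to rule out, one at a time, each of the five weight forms in \cref{propc}(ii) that are not in the list of \cref{propc6}, namely
\[
(+,+,+,+,-,-;+,+),\ (+,+,-,-,-,-;-,-),\ (+,+,+,-,-,-;+,-),\ (+,+,-,-,-,0;-,0),\ (+,+,-,-,0,0;+,-).
\]
In every case the argument combines three ingredients already in the paper: \cref{monomial} (forcing specific monomials in $f_1,f_2$), \cref{lw} and its symmetric statement on $X^+$ (placing the extremal points $P_1$ and $P_{r+1}$ in $X^-,X^+$ respectively), and the terminal $3$-fold classification of \cref{thms} together with the numerical criterion of \cref{propi}.

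For each bad form and each $k\in\{1,2\}$, I first apply \cref{monomial} to get a monomial $\mu_k\omega_k\in f_k$ with $\mu_k\in\cc[\mathbf{x}]$ non-constant (so $\wt\mu_k>0$) and a monomial $\eta_k\nu_k\in f_k$ with $\nu_k\in\cc[\mathbf{y}]$ non-constant (so $\wt\nu_k<0$). The known sign of $e_k$ then confines $\omega_k,\eta_k$ to a narrow list of coordinates: $e_k>0$ forces $\eta_k\in\{x_i\}$, while $e_k<0$ forces $\omega_k\in\{y_j\}$; the distinctness clauses of the lemma make two such coordinates actually distinct. Next I dehomogenise at $x_1=1$ to present $P_1\in X^-$ as a codimension-$2$ hyperquotient germ in $\cc^5/\mu_{a_1}$. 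Terminality (\cref{thms}) forces at least one of the $\bar f_k$ to carry a linear term, reducing the germ to a hyperquotient hypersurface with explicit eigenweights. Reading $\tau\bmod a_1$ off that germ either directly contradicts \cref{propi} (since $\tau>0$ is required for a flip and $\tau=0$ for a flop) or produces a congruence incompatible with the forced monomial shape from the previous step. When the $X^-$-side alone does not suffice, the symmetric analysis at the $X^+$-side extremal point $P_{r+1}$ closes the case.

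The main obstacle is the form $(+,+,-,-,0,0;+,-)$, where the two zero-weight variables $z_1,z_2$ provide enough flexibility that quick sign-counting (which dispatches the $t=0$ forms) no longer applies. There both $f_1(\mathbf{0},\mathbf{0},\mathbf{z})$ and $f_2(\mathbf{0},\mathbf{0},\mathbf{z})$ vanish identically because $e_1>0$ and $e_2<0$ while $\wt z_i=0$, and I would combine this with the forced monomials $x_iy^\beta\in f_1$ and $x^\alpha y_j\in f_2$ to run a fibre-dimension count along the $\mathbf{z}$-locus in the spirit of \cref{corc}, contradicting the contraction condition that $\pi_-$ collapses only finitely many curves. \cref{isosing} (forcing $a_1>a_2$ and $b_1>b_2$ whenever the indices exceed $1$) then collapses the remaining subcases before matching the singular germ at $P_1$ and $P_3$ against \cref{thms}.
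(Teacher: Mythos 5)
Your overall strategy---eliminating the five residual weight forms of \cref{propc}(ii) one by one using \cref{monomial}, terminality of $P_1$ and $P_{r+1}$ via \cref{thms}, and the criterion of \cref{propi}---is exactly the paper's, which works out $(+,+,-,-,0,0;+,-)$ and $(+,+,-,-,-,0;-,0)$ explicitly and calls the other three ``similar.'' However, the one case you elaborate contains a genuine gap, and you have misjudged where the real difficulty sits.

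For $(+,+,-,-,0,0;+,-)$ your fibre-dimension count does not go through. It is true that $f_1(\mathbf{0},\mathbf{0},\mathbf{z})=f_2(\mathbf{0},\mathbf{0},\mathbf{z})=0$, but the fibre of $\pi_-$ over the image of $(0,0,0,0,\lambda_1,\lambda_2)$ is, as in the proof of \cref{corc}, $\proj\cc[x_1,x_2]/(f_1(x_1,x_2,0,0,\lambda),f_2(x_1,x_2,0,0,\lambda))$. Since $e_2<0$ you do get $f_2(x_1,x_2,0,0,\lambda)\equiv0$; but since $e_1>0$, $f_1$ may contain monomials $x^\alpha z^\gamma$ with $\wt x^\alpha=e_1$, so $f_1(x_1,x_2,0,0,\lambda)$ is in general a nonzero weighted form in $x_1,x_2$ and the fibre is $0$-dimensional. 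No positive-dimensional family of contracted curves is produced, so condition C is not contradicted. The paper instead kills this case in three lines on the $X^-$ side alone: terminality of $P_1$ gives $x_1^m\omega\in f_\ell$ for some $\ell$ and $\omega\neq x_1$, while \cref{monomial} gives $\eta\nu\in f_\ell$ with $\nu\in\cc[\mathbf{y}]$ nonconstant, so $ma_1+\wt\omega=e_\ell=\wt\eta+\wt\nu<a_1$ and hence $\omega=y_j$; the resulting germ at $P_1$ has the shape $\frac{1}{a_1}(*,*,0,0;*)$ or $\frac{1}{a_1}(*,*,0)$, which is terminal only if $a_1=1$, and then \cref{monomial} forces $e_j\le\wt\eta_j+\wt\nu_j\le 1-1=0$ for every $j$, contradicting $e_1>0$. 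Conversely, the case that genuinely resists the quick recipe is $(+,+,-,-,-,0;-,0)$: there ``reading $\tau\bmod a_1$ off the germ'' is not enough, and the paper must first use \cref{propi} in the form $a_1+a_2\ge b_1+b_3+ma_1$ to pin down $0<a_2-b_1,\,a_2-b_3,\,b_1+b_3<a_1$ before it can exclude H1 at $P_1$, alongside separate arguments ruling out the quotient, H2, and H3 alternatives. Your proposal needs that analysis (or an equivalent one) spelled out.
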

\begin{proof}
    By \cref{propc}, we need to show that the following weight forms 
    \[
    \begin{array}{cc}
        (+,+,-,-,\m{0},\m{0};+,\m{-}), \;\; (+,+,-,-,-,\m{0};-,\m{0}), & (+,+,-,-,-,-;-,-), \\ 
        (+,+,+,-,\m{-},\m{-};+,\m{-}), \;\; (+,+,+,+,-,\m{-};+,\m{+})\phantom{,} & 
    \end{array}
    \]
    have no new examples. For every case, we use our strategy, i.e., similar arguments of proof of \cref{thmflop5}, \cref{thmflop6}, and \cref{thmflip6}, to get contradictions. In this proof, we only discuss $(+,+,-,-,0,0;+,-)$ and $(+,+,-,-,-,0;-,0)$ cases.
    
    For the $(+,+,-,-,0,0;+,-)$ case, without loss of generality, say $x_1^{m}\omega\in f_\ell$ for some $m\geq1$, coordinate $\omega\neq x_1$, and $\ell=1,2$, since $P_1$ is terminal. On the other hand, we may assume $\eta \nu \in f_\ell$ for some monomials $\nu\in \cc[y_1,y_2]$ and coordinate $\eta$ by \cref{monomial}. Now, $m a_1 +\wt\omega=e_1=\wt\eta + \wt \nu < a_1 $ since $a_1\geq\wt\eta$ and $\wt \nu<0$. This shows that $\omega=y_j$ for some $j$, and hence the singular germ of $P_1$ is $\frac{1}{a_1}(*,*,0)$ or $\frac{1}{a_1}(*,*,0,0;*)$. Therefore, $a_1=1$ by \cref{thms}. Similarly, $e_j<a_1=1$ for all $j$ by \cref{monomial}, contradicts that one of $f_1$ and $f_2$ has positive weight. 

    For the $(+,+,-,-,-,0;-,0)$ case, denote $z\coloneqq z_1$. By \cref{monomial}, say that $\mu_i \omega_i \in f_i$ for some monomials $\mu_i\in \cc[x_1,x_2]$ and coordinates $\omega_i$. Since $e_i\leq0$, we may assume $\mu_i y_i \in f_i$ for some monomials $\mu_i\in \cc[x_1,x_2]$. Thus, the inequality of \cref{propi} becomes $a_1 + a_2 \geq b_3 + \wt (\mu_1 \mu_2)$. Hence, there are integers $m_j$ such that $\mu_j=x_2^{m_j}$ for all $j$ and $a_1>a_2$; otherwise, $\wt (\mu_1 \mu_2)\geq a_1+a_2$, which is a contradiction. Similarly, if both $\bar{f}_1$ and $\bar{f}_2$ contains independent linear terms, then $b_1+b_2+b_3+e_1+e_2>a_1+a_2$, which is a contradiction. Thus, we may assume $P_1$ is a hyperquotient singularity and $x_1^m y_2 \in f_1$. Then the singular germ of $P_1$ is $\frac{1}{a_1}(a_2,-b_1,-b_3,0;e_2)$. Thus, $P_1$ is neither H2 nor H3, since the weights in the singular germ of $P_1$ contain $0$ and $a_1>a_2\geq1$. Now, $a_1\mid e_2 = m_2 a_2 -b_2$ and $\gcd(a_1,a_2)=1$, so $a_1\nmid b_2$. This implies $a_1\nmid e_1 = m a_1 - b_2$, and hence $e_1\neq0$ and $e_2=0$. By \cref{propi}, we obtain 
    \[
    a_1+a_2\geq b_1+b_3+(e_1+b_2)=b_1+b_3+m a_1\geq b_1+b_3+a_1, 
    \]
    so $0<a_2-b_1,a_2-b_3,b_1+b_3<a_1$. Hence, $P_1$ is not H1. To sum up, $P_1$ is not terminal, so this weight form is not allowed. 
\end{proof}

\subsection{Higher Codimension Cases}

In this section, we claim that there are no examples in codimension $\geq3$, i.e., \cref{thmhc}. First, we prove the following lemma.

\begin{lem}\label{lemhc}
    If $n>6$, then $a_1>a_i$ for all $i\neq1$.
\end{lem}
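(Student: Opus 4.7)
The plan is to prove the contrapositive: if $a_1 = 1$, then $n \le 6$. Combined with \cref{isosing} (which handles $a_1 > 1$), this yields the lemma. So assume $a_1 = 1$; then the standing convention $a_1 \ge a_i$ forces $a_i = 1$ for all $i$. The first step is to show $e_k \le 0$ for every $k$. By the second half of \cref{monomial}, each $f_k$ contains a monomial $\eta_k \nu_k$ with $\nu_k \in \cc[\mathbf{y}]$ non-constant, hence $\wt \nu_k \le -1$; since $\eta_k$ is a coordinate of weight at most $a_1 = 1$, the identity $e_k = \wt \eta_k + \wt \nu_k$ forces $e_k \le 0$.

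The second step is to pin down $r = 2$ and to identify the structure of each $f_k$ on the $y$-side. The proof of \cref{propc} applies verbatim in any $n$: if $r > 2$, then at least $r - 2$ of the restrictions $f_k(\mathbf{x}, \mathbf{0}, \mathbf{0})$ are nonzero, each a monomial in $\cc[\mathbf{x}]$ of positive weight, forcing $e_k > 0$; since this is excluded by the previous step, $r = 2$. Now apply the first half of \cref{monomial}: each $f_k$ contains $\mu_k \omega_k$ with $\mu_k \in \cc[x_1, x_2]$ non-constant, so $\wt \mu_k \ge 1$ and $\wt \omega_k \le e_k - 1 \le -1$. Hence every $\omega_k$ is a $y$-variable, say $\omega_k = y_{j(k)}$, and the indices $j(k)$ are pairwise distinct by the same lemma. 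The relation $\wt \mu_k = e_k + b_{j(k)} \ge 1$, summed over $k$, then yields
\[
\sum_k e_k + \sum_j b_j \;\ge\; \sum_k e_k + \sum_k b_{j(k)} \;\ge\; c,
\]
where the first inequality uses the injectivity of $k \mapsto j(k)$.

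To finish, \cref{propi} gives $\tau = \sum_i a_i - \sum_j b_j - \sum_k e_k = 2 - \sum_j b_j - \sum_k e_k \ge 0$ for any GIT flip or flop, so the above estimate forces $\tau \le 2 - c$ and hence $c \le 2$, i.e., $n \le 6$, contradicting $n > 6$. The main conceptual ingredient is the simultaneous use of both halves of \cref{monomial}: one half forces $e_k \le 0$, while the other supplies a lower bound on $\sum_k e_k + \sum_j b_j$ via distinctness of $j(k)$; only together do the two estimates pinch the numerical invariant $\tau$ tightly enough to bound the ambient dimension.
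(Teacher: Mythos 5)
Your proof is correct and follows essentially the same route as the paper: reduce to showing $a_1>1$ via \cref{isosing}, assume $a_1=1$, apply both halves of \cref{monomial} to get $e_k\le0$, $r=2$, and distinct $y$-variables $\omega_k$, and then use $\tau\ge0$ from \cref{propi} to force $c\le2$. The only difference is cosmetic: you keep the bound as $\sum_k e_k+\sum_j b_j\ge c$ rather than substituting $e_\ell=\wt\mu_\ell-b_\ell$ directly into $\tau$ as the paper does.
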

\begin{proof}
    By \cref{isosing}, it is sufficient to show that $a_1>1$. Suppose on the contrary that $a_1=1$. Then $a_i=1$ for all $i$. For any $\ell$, there exist coordinate $\eta$ and $\nu\in\cc[\mathbf{y}]$ such that $\eta \nu\in f_\ell$ by \cref{monomial}, so $e_\ell=\wt\eta+\wt \nu\leq 1 - \min_j b_j\leq0$. This shows $r=2$ by the proof of \cref{propc}. By \cref{monomial} and $e_\ell\leq0$, there exist $\mu_\ell\in\cc[x_1,x_2]$ and distinct $\omega_\ell\in\{y_1,\ldots,y_r\}$ such that $\mu_\ell \omega_\ell\in f_\ell$ for all $\ell$. Without loss of generality, we may assume $\mu_\ell y_\ell\in f_\ell$ for all $\ell$ and the weights are $(a_1,a_2,-b_1,\ldots,-b_{c},-v_1,-v_2)$, where $v_1, v_2\geq0$. By \cref{propi}, we have $2=a_1+a_2\geq v_1+v_2+\sum_i\wt \mu_j\geq c=n-4$ since $\mu_j$ is in $(x_1,x_2)-\{0\}$. This implies $n\leq6$, which is a contradiction. 
\end{proof}

From now on, we assume an additional condition $b_1\geq b_j$ for all $j$, then $\widetilde{P}_1\coloneqq P_{r+1}\in X^+$ by \cref{lw}. 

\begin{prop}\label{thmhc3}
    Any GIT flip or flop of codimension $>3$ is not reduced.
\end{prop}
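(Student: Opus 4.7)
The plan is a contradiction argument: suppose a reduced GIT flip or flop exists with codimension $c \geq 4$ (so $n = c+4 \geq 8 > 6$), and derive incompatible constraints at the two distinguished points $P_1 \in X^-$ and $\widetilde P_1 \coloneqq P_{r+1} \in X^+$. Since $n > 6$, \cref{lemhc} gives the strict maximum $a_1 > a_i$ for $i \neq 1$, while \cref{lw} (with the standing assumption $b_1 \geq b_j$) ensures $P_1 \in X^-$ and $\widetilde P_1 \in X^+$, so \cref{thms} applies at both points.

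At $P_1$, by the classification in \cref{thms} together with the discussion immediately following it, at least $c-1$ of the polynomials $\bar f_k \coloneqq f_k|_{x_1=1}$ must have nonzero linear parts, and those linear parts are linearly independent. Write $S_-$ for the set of such indices, so $|S_-| \geq c - 1$. For each $k \in S_-$, reducedness forces a monomial $x_1^{m_k}\omega_k \in f_k$ with $m_k \geq 1$ and $\omega_k$ a coordinate $\neq x_1$, yielding $e_k = m_k a_1 + \wt \omega_k \geq a_1 - b_1$. The symmetric analysis at $\widetilde P_1$ produces $S_+$ with $|S_+| \geq c - 1$ and a monomial $y_1^{m'_k}\widetilde\omega_k \in f_k$ for each $k \in S_+$, giving the reverse bound $e_k \leq a_1 - b_1$; here I use \cref{lemhc} to conclude that $a_1$ is exactly the maximum possible weight of a coordinate $\neq y_1$.

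For every $k \in S_- \cap S_+$, the sandwich forces $e_k = a_1 - b_1$, $m'_k = 1$ and $\wt \widetilde\omega_k = a_1$, hence $\widetilde\omega_k = x_1$. I will then extract the sharper consequence by inspecting the \emph{entire} linear part of $f_k|_{y_1=1}$ for such $k$: any monomial $y_1^m \omega \in f_k$ contributing to that linear part (so $m \geq 1$, $\omega$ a coordinate $\neq y_1$) must satisfy $\wt \omega = a_1 + (m-1) b_1 \geq a_1$, and strict maximality of $a_1$ again forces $\omega = x_1$ and $m = 1$. Consequently, for every $k \in S_- \cap S_+$, the linear part of $f_k|_{y_1=1}$ is a nonzero scalar multiple of $x_1$.

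To close the argument, inclusion--exclusion gives $|S_- \cap S_+| \geq (c-1) + (c-1) - c = c - 2 \geq 2$ whenever $c \geq 4$. Terminality at $\widetilde P_1$ demands that the linear parts indexed by $S_+$ be linearly independent, but the $\geq 2$ linear parts coming from $S_- \cap S_+ \subseteq S_+$ all lie in the one-dimensional span of $x_1$, contradicting linear independence. The main subtlety is the second weight computation --- showing that the linear part is \emph{only} a multiple of $x_1$, rather than merely containing $x_1$ --- and this is exactly where the strict maximality $a_1 > a_i$ (hence the hypothesis $n > 6$ supplied by \cref{lemhc}) does all the work; without it, other coordinates of weight $a_1$ could appear in the linear part and the contradiction collapses.
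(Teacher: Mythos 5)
Your proposal is correct and follows essentially the same route as the paper's proof: both bound $e_k$ from below via a monomial $x_1^{m_k}\omega_k$ (using reducedness and $\wt\omega_k\ge -b_1$) and from above via a monomial $y_1^{m'_k}\widetilde\omega_k$ (using $\wt\widetilde\omega_k\le a_1$), force $\wt\widetilde\omega_k=a_1$ hence $\widetilde\omega_k=x_1$ by \cref{lemhc}, and then derive a contradiction from the fact that at least two indices lie in both sets, so two of the supposedly independent linear parts at $\widetilde P_1$ collapse onto $x_1$. Your bookkeeping via $S_-$, $S_+$ and inclusion--exclusion, and your explicit check that the \emph{entire} linear part of $f_k|_{y_1=1}$ is a multiple of $x_1$, are just slightly more careful phrasings of the paper's ``$\eta_3=\eta_4=x_1$ contradicts $\eta_3\neq\eta_4$'' step.
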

\begin{proof}
    Assume $n\geq8$, and then the $4$-fold $A$ is given by at least $4$ polynomials $f_j$. Since $P_1$ is terminal, there are $n-5$ distinct coordinates $\omega_j\neq x_1$ such that $\omega_j\in \bar{f}_j$. Without loss of generality, say $x_1^{\ell_j}\omega_j\in f_j$ for $j\geq2$, where $\ell_j\geq1$. Similarly, we may assume that there are distinct $\eta_j\in\{x_i,y_j,z_k\}_{i,j,k}$ and $m_j\geq1$ such that $y_1^{m_j}\eta_j\in f_j$ for all $j\geq3$. Then we have
    \[
    a_1-b_1 
    \leq \ell_3 a_1 + \wt \omega_3
    = e_3
    = \wt \eta_3 - m_3 b_1 
    \leq a_1 - b_1,
    \]
    so $\wt \eta_3=a_1$. Similarly, $\wt \eta_4=a_1$. Note $\eta_3\neq\eta_4$, so one of them is not $x_1$, contradict that $a_1>\wt \omega$ for all $\omega\neq x_1$. 
\end{proof}

Finally, we work on the codimension $3$ case.  

\begin{prop}\label{thmhce3}
    Any GIT flip or flop of codimension $3$ is not reduced.
\end{prop}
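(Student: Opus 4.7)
The plan is to follow the skeleton of \cref{thmhc3}'s proof while inserting the finer case analysis necessary when $c = 3$, so that only $c - 1 = 2$ of the $\bar f_j$'s are forced to carry linear terms at each of $P_1$ and $\widetilde{P}_1$. Throughout I assume for contradiction a reduced GIT flip or flop of codimension $3$, so $n = 7$ and $c = 3$. By \cref{lemhc} (and its symmetric statement on the $y$-side, obtained by exchanging the roles of $X^-$ and $X^+$) one has $a_1 > a_i$ and $b_1 > b_j$ for all $i, j \neq 1$, hence $a_1, b_1 > 1$, and \cref{lw} places both $P_1 \in X^-$ and $\widetilde{P}_1 \in X^+$ in the support of the flip/flop.

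First I exhaust the shapes of the germs at $P_1$ and $\widetilde{P}_1$ using the paragraph after \cref{thms}: at least $c - 1 = 2$ of the $\bar f_j$'s must contain a linear term at $P_1$. If all three did, distinct coordinates $\omega_1, \omega_2, \omega_3 \neq x_1$ with $x_1^{\ell_j} \omega_j \in f_j$ would exist, and for any $j$ in the (nonempty) intersection with the analogous index set $J \subseteq \{1, 2, 3\}$ at $\widetilde{P}_1$ the chain
\[
a_1 - b_1 \;\leq\; \ell_j a_1 + \wt \omega_j \;=\; e_j \;=\; \wt \eta_j - m_j b_1 \;\leq\; a_1 - b_1
\]
(using $\wt \omega_j \geq -b_1$ with equality iff $\omega_j = y_1$, and $\wt \eta_j \leq a_1$ with equality iff $\eta_j = x_1$) would collapse and force $\omega_j = y_1$ for each such $j$, contradicting distinctness. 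Thus exactly two $\bar f_j$'s have linear terms at each of $P_1$ and $\widetilde{P}_1$; after relabeling they are indexed by $\{2, 3\}$ at $P_1$ and by some $J \subseteq \{1, 2, 3\}$ of size $2$ at $\widetilde{P}_1$.

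Second, applying the collapsed chain above to any $j \in \{2, 3\} \cap J \neq \emptyset$ forces $\ell_j = m_j = 1$, $\omega_j = y_1$, $\eta_j = x_1$, and $e_j = a_1 - b_1$, and distinctness again forbids two such $j$'s, so $|\{2, 3\} \cap J| = 1$ and after a further relabeling $J = \{1, 2\}$. This pins down
\[
f_2 \ni x_1 y_1 \ \text{with} \ e_2 = a_1 - b_1; \quad f_3 \ni x_1^{\ell_3} \omega_3, \ \omega_3 \notin \{x_1, y_1\}; \quad f_1 \ni y_1^{m_1} \eta_1, \ \eta_1 \notin \{x_1, y_1\}.
\]

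Finally I propose to close the contradiction by combining the terminal classifications at both points with \cref{propi}. At $P_1$, eliminating $y_1$ and $\omega_3$ from the linear parts of $\bar f_2, \bar f_3$ realizes the germ as a hypersurface of multiplicity $\geq 2$ in $\cc^4 / \tfrac{1}{a_1}(v_1, v_2, v_3, v_4)$; the list in \cref{thms} (H1 sub-cases $cA/a_1$, $cD/3$, $cAx/2$, and H2) rigidly dictates $e_1 \bmod a_1$, the residues $v_i \bmod a_1$, and the pairing of weights in the quadratic part of the reduced $\bar f_1$, and a symmetric analysis at $\widetilde{P}_1$ with $\bar f_3$ playing the order-$\geq 2$ role dictates the corresponding data for the $b$-side. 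The main obstacle I anticipate is the bookkeeping: for each pair of singular types at $(P_1, \widetilde{P}_1)$ and each choice of $(\ell_3, \omega_3, m_1, \eta_1)$, one must rule out simultaneous satisfaction of the mod-$a_1$ and mod-$b_1$ constraints in the presence of $e_2 = a_1 - b_1$ and the sign of $\tau$ from \cref{propi}. The cleanest route is probably to first dispatch H2 (whose rigid weight recipe clashes with $e_2 = a_1 - b_1$), then $cA/a_1$ by enumerating which pair in $\{a_i, -b_j, 0\}$ can sum to $0 \bmod a_1$ while tracking $\eta_1$ and $\omega_3$, and finally the small-index cases $cD/3$ and $cAx/2$, where $a_1 \in \{2, 3\}$ allows explicit $\tau$-bounds in the spirit of the final case treated in \cref{propc6}.
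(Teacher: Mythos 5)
Your setup is sound and essentially reproduces the paper's: the paper likewise reduces to the configuration $x_1^{\ell}\omega\in f_1$, $x_1y_1\in f_2$, $\eta y_1^{m}\in f_3$ (your labelling permutes the indices), concludes that exactly one $f_j$ fails to have a linear term at each of $P_1$ and $\widetilde P_1$, and hence that both points are hyperquotient singularities, neither smooth, cDV, nor of type Q. Your collapsing-chain argument for why at most one index can lie in both linear-term sets is the same inequality used in \cref{thmhc3} and is correct given $a_1>a_i$ and $b_1>b_j$.

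The genuine gap is that the entire substance of the proof --- eliminating the remaining possibilities H1 and H2 for $P_1$ --- is announced as a plan but never executed. This is not routine bookkeeping that can be waved through: in the paper the H2 case requires showing $t=0$ and then splitting on $r\ge4$ versus $s\ge4$ to force $n\le6$, and the H1 case splits on $t=0$ versus $t=1$, where the $t=1$ subcase is \emph{not} resolved by congruences modulo $a_1$ and $b_1$ together with a $\tau$-bound at all. For the weight form $(+,+,+,-,-,-,0)$ the paper derives a contradiction from condition C by exhibiting infinitely many contracted curves $\pi_\pm^{-1}(0:\cdots:0:\lambda)$, and for $(+,+,+,+,-,-,0)$ from the non-reducedness of $C^+$; these are geometric inputs absent from your proposed "mod-$a_1$ and mod-$b_1$ constraints plus sign of $\tau$" strategy. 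You would also need the extra monomials $\mu_i\omega_i$ and $\eta_j\nu_j$ supplied by \cref{monomial} in \emph{every} $f_i$ (not just the linear terms at $P_1$ and $\widetilde P_1$) to obtain the inequalities $e_1>e_2>e_3$ and the bounds on $r$, $s$, $t$ that drive those eliminations. As written, the proposal stops exactly where the proof begins.
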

\begin{proof}
    We will prove this by verifying the type of $P_1$. Now, $P_1$ is neither smooth nor cDV, since $a_1>1$. Similar to the proofs of \cref{lemhc} and \cref{thmhc3}, we may assume that $b_1>b_j$ for all $j\neq1$ and neither $f_3|_{x_1=1}$ nor $f_1|_{y_1=1}$ has linear terms. Without loss of generality, let $x_1^\ell\omega\in f_1$, $x_1 y_1\in f_2$, $\eta y_1^m\in f_3$. Hence, neither $P_1$ nor $\widetilde{P}_1$ is a quotient singularity. 
    
    By \cref{monomial}, we may assume $\mu_i\omega_i\in f_i$ for some monomials $\mu_i\in \cc[\mathbf{x}]$ and coordinates $\omega_i$ such that $\{\omega_i\}_{\wt\leq0}$ are distinct. Similarly, we suppose $\eta_j \nu_j \in f_j$ for some monomials $\nu_j\in \cc[\mathbf{y}]$ and coordinates $\eta_j$ such that $\{\eta_j\}_{\wt\geq0}$ are distinct. Note $\wt\eta\geq\wt \mu_3>0$ since $-m b_1\leq b_1\leq \wt\omega_3$. Say $\eta=x_2$ and $\omega=y_2$ by similar arguments, i.e.,
    \[
    \begin{aligned}
        & x_1^\ell y_2 &\hspace{-1em}&+ \mu_1 \omega_1  + \eta_1 \nu_1 \in f_1, \\
        & x_1 y_1  &\hspace{-1em}&+ \mu_2 \omega_2 + \eta_2 \nu_2 \in f_2, \\
        & x_2 y_1^m &\hspace{-1em}&+ \mu_3 \omega_3 + \eta_3 \nu_3 \in f_3.
    \end{aligned}
    \]
    
    Now, $e_1 > e_2 > e_3$ by $\ell,m\geq1$, $-b_2 > -b_1$, and $a_1>a_2$. Rewrite the coordinates as $x_1$, $x_2$, $y_1$, $y_2$, $u_1$, $u_2$, $u_3$, and set $v_j\coloneqq \wt u_j$, then the singular germs of $P_1$ and $\widetilde{P}_1$ are
    \[
    \frac{1}{a_1}(a_2,v_1,v_2,v_3;e_3) \mbox{ and }
    \frac{1}{b_1}(-b_2,v_1,v_2,v_3;e_1),
    \]
    respectively. 
    
    Next, we claim that $P_1$ is not H2. If not, then $a_1=4$ and we will show that $s+r+t=n\leq6$. Now, $v_j\neq0$ for all $j$, so $t=0$. 
    
    If $s\geq4$, then $e_2,e_3<0$ by the proof of \cref{propc} and $e_1>e_2>e_3$. Since $b_1 = a_1 - e_2 > a_1 = 4$, $\widetilde{P}_1$ is not H2. Moreover, $b_1 > |\wt \omega| > 0$ for any coordinates $\omega$ by $t=0$ and $b_1>a_1$, so $b_1$ cannot divide any weights of coordinates except $-b_1$ and hence $\widetilde{P}_1$ is not H1. To sum up, $\widetilde{P}_1$ is not terminal, which is a contradiction. 
    
    If $r\geq4$, then $e_1,e_2>0$ for the same reason. Now, $a_1-b_1=e_2\geq2$ since we assume that $f_j$ has no linear terms. Thus, $b_1=2$ since $2 = a_1 - 2 \geq b_1 > 1$. However, the weight of $f_3$ gives us
    \[
    1 = (a_1-1) - b_1 \geq a_2 - m b_1 = e_3 = \wt \mu_3 + \wt\omega_3\geq 1 - b_1 = -1, 
    \]
    contradict to $e_3\equiv2\pmod{4}$. Thus, $n=r+s+t\leq 3+3+0=6$, which is a contradiction. 

    Finally, we show that $P_1$ is not H1. If $P_1$ is H1, than we may assume $\gcd(a_1,v_1 v_2)=1$ and $a_1\mid v_3$ by $a_1\nmid a_2$. Since $a_1$ divides both $e_3$ and $v_3$, we say $e_3=-q a_1$ and $v_3=-q' a_1$ for some $q,q'\geq0$ by $e_3 , \wt\omega < a_1$ ($\omega\neq x_1$); hence, $\gcd(a_1,v_1 v_2)=1$. This shows that $t\leq1$. 
    
    When $t=0$, then say $y_3\coloneqq u_3$. This implies $b_1>b_3=q'a_1\geq a_1$, and hence $b_1>|\wt\omega|>0$ for all coordinates $\omega\neq y_1$. Thus, $\widetilde{P}_1$ is H2 and $b_1=4$. On the other hand, notice that $e_3<e_2=a_1-b_1<0$, so $r\leq3$. Thus, $s=7-r-t\geq4$. Since neither $f_1$ nor $f_2$ contains linear terms and one of $f_1,f_2$ contains monomial in $\cc[\mathbf{y}]$, so $e_2\leq -2$. Now, $4=b_1\geq a_1+2 >3$, so $a_1=2$. However, notice that $-q a_1 = e_3 = a_2-mb_1 = a_2-4m$, so $a_1\mid a_2$, which is a contradiction. 
    
    For $t=1$ case, denote $z\coloneqq z_1=v_3$ and we only need to consider $3$ weight forms $(+,+,-,-,-,-,0)$, $(+,+,+,-,-,-,0)$, and $(+,+,+,+,-,-,0)$. For the weight form $(+,+,-,-,-,-,0)$, we will get that $a_1\nmid a_2+v_1$, $a_1\nmid a_2+v_2$, and $a_1\nmid v_1+v_2$, so $P_1$ is not H1, which is a contradiction. For the $(+,+,+,-,-,-,0)$ case, we claim that there are infinity many contracted curves $\pi_-^{-1}(0:\cdots:0:\lambda)\subseteq X^-$ or there are infinity many contracted curves $\pi_+^{-1}(0:\cdots:0:\lambda)\subseteq X^+$, contradict to condition C of flips and flops. For the $(+,+,+,+,-,-,0)$ case, we will obtain that $C^+$ is not reduced, which is a contradiction. 
\end{proof}

\begin{proof}[Proof of \cref{thmhc}]
    This is proved by \cref{thmhc3} and \cref{thmhce3}. 
\end{proof}

\nocite{KM98,Mat02,MFK12,Muk03,Mor85,Rei87,Har77}
\bibliographystyle{amsalpha}
\addcontentsline{toc}{section}{References}
\bibliography{References.bib}

\end{document}